\numberwithin{equation}{section}
\newlength{\leftstackrelawd}
\newlength{\leftstackrelbwd}
\def\leftstackrel#1#2{\settowidth{\leftstackrelawd}%
	{${{}^{#1}}$}\settowidth{\leftstackrelbwd}{$#2$}%
	\addtolength{\leftstackrelawd}{-\leftstackrelbwd}%
	\leavevmode\ifthenelse{\lengthtest{\leftstackrelawd>0pt}}%
	{\kern-.5\leftstackrelawd}{}\mathrel{\mathop{#2}\limits^{#1}}}
\theoremstyle{plain}
\newtheorem{thm}{Theorem}[section]
\newtheorem{lem}[thm]{Lemma}
\newtheorem{cor}[thm]{Corollary}
\newtheorem*{thm*}{Theorem}
\theoremstyle{definition}
\newtheorem{rmk}[thm]{Remark}
\newtheorem{?}[thm]{Problem}
\newtheorem{Ex}[thm]{Example}
\newenvironment{customthm}[1]
{\innercustomthm}
{\endinnercustomthm}
\renewcommand{\phi}{\varphi}
\renewcommand{\epsilon}{\varepsilon}
\def\@cite#1#2{[\textbf{#1\if@tempswa , #2\fi}]}
\def\@biblabel#1{[\textbf{#1}]}
\newcommand*{\defeq}{\mathrel{\rlap{%
			\raisebox{0.3ex}{$\m@th\cdot$}}%
		\raisebox{-0.3ex}{$\m@th\cdot$}}%
	=}
\newcommand*{\eqdef}{=\mathrel{\rlap{%
			\raisebox{0.3ex}{$\m@th\cdot$}}%
		\raisebox{-0.3ex}{$\m@th\cdot$}}%
}
\newcounter{marnote}
\def\underbracex#1#2{\mathop{\vtop{\m@th\ialign{##\crcr
				$\hfil\displaystyle{#2}\hfil$\crcr
				\noalign{\kern3\p@\nointerlineskip}%
				#1\crcr\noalign{\kern3\p@}}}}\limits}
\def\upbracefilla{$\m@th \setbox\z@\hbox{$\braceld$}%
	\bracelu\leaders\vrule \@height\ht\z@ \@depth\z@\hfill 
	\kern\p@\vrule \@width\p@\kern\p@\vrule \@width\p@\kern\p@\vrule \@width\p@
	$}
\def\upbracefillb{$\m@th \setbox\z@\hbox{$\braceld$}%
	\vrule \@width\p@\kern\p@\vrule \@width\p@\kern\p@\vrule \@width\p@\kern\p@
	\leaders\vrule \@height\ht\z@ \@depth\z@\hfill\bracerd
	\braceld\leaders\vrule \@height\ht\z@ \@depth\z@\hfill
	\kern\p@\vrule \@width\p@\kern\p@\vrule \@width\p@\kern\p@\vrule \@width\p@
	$}
\def\upbracefillc{$\m@th \setbox\z@\hbox{$\braceld$}%
	\vrule \@width\p@\kern\p@\vrule \@width\p@\kern\p@\vrule \@width\p@\kern\p@
	\leaders\vrule \@height\ht\z@ \@depth\z@\hfill
	\kern\p@\vrule \@width\p@\kern\p@\vrule \@width\p@\kern\p@\vrule \@width\p@
	$}
\def\upbracefilld{$\m@th \setbox\z@\hbox{$\braceld$}%
	\vrule \@width\p@\kern\p@\vrule \@width\p@\kern\p@\vrule \@width\p@\kern\p@
	\leaders\vrule \@height\ht\z@ \@depth\z@\hfill\braceru$}
\def\upbracefillbd{$\m@th \setbox\z@\hbox{$\braceld$}%
	\vrule \@width\p@\kern\p@\vrule \@width\p@\kern\p@\vrule \@width\p@\kern\p@
	\bracerd\braceld
	\leaders\vrule \@height\ht\z@ \@depth\z@\hfill\braceru$}
\def\l@subsection{\@tocline{2}{0pt}{2.5pc}{5pc}{}}
\begin{document}

	\title[Interior regularity for strong solutions]{Interior regularity for strong solutions to a class of fully nonlinear elliptic equations} 
	
	\author{Jonah A. J. Duncan}
	\address{Johns Hopkins University, 404 Krieger Hall, Department of Mathematics, 3400 N. Charles Street, Baltimore, MD 21218, US.}
	\curraddr{}
	\email{jdunca33@jhu.edu}
	\thanks{}

	\date{}

	\maketitle

	\vspace*{-5mm}\begin{abstract}
		We obtain local pointwise second derivative estimates for $W^{2,p}$-strong \linebreak solutions to a class of fully nonlinear elliptic equations on Euclidean domains,\linebreak motivated by problems in conformal geometry.

		\vspace*{1mm}
		
		%	\medskip
		%	Keywords: xxxxx
		
		%	\medskip
		%	MSC: xxxxx
	\end{abstract}

\section{Introduction}

Let $\Omega\subset\mathbb{R}^n$ ($n\geq 2$) be a domain. For a positive function $u:\Omega\rightarrow \mathbb{R}$, denote by $A_u$ the symmetric matrix-valued function
\begin{equation*}
A_u = \nabla^2 u - \frac{|\nabla u|^2}{2u}I,
\end{equation*}
where $I$ is the $n\times n$ identity matrix, and denote by $\lambda(A_u(x))\in\mathbb{R}^n$ the eigenvalues of the matrix $A_u(x)$. In this paper, we obtain local pointwise second derivative estimates for positive solutions $u\in W^{2,p}_{\operatorname{loc}}(\Omega)\cap C^{0,1}_{\operatorname{loc}}(\Omega)$ to equations of the form
\begin{equation}\label{1}
f(\lambda(A_u(x))) = \psi(x,u(x))>0, \quad \lambda(A_u(x))\in\Gamma \quad\text{for a.e. }x\in\Omega,
\end{equation}
where $f$ and $\Gamma$ are assumed to satisfy the following standard properties\footnote{We note that, given $\Gamma$ satisfying \eqref{21} and \eqref{22}, there exists a defining function $f$ satisfying \eqref{23} and \eqref{24} -- see \cite[Appendix A]{LN20}.}: 
\begin{align}
& \Gamma\subset\mathbb{R}^n\text{ is an open, convex, connected symmetric cone with vertex at 0}, \label{21} \\ 
& \Gamma_n^+ = \{\lambda\in\mathbb{R}^n: \lambda_i > 0~\forall\,1\leq i \leq n\} \subseteq \Gamma \subseteq \Gamma_1^+ =\{\lambda\in\mathbb{R}^n : \lambda_1+\dots+\lambda_n > 0\}, \label{22}  \\
& f\in C^\infty(\Gamma)\cap C^0(\overline{\Gamma}) \text{ is concave, 1-homogeneous and symmetric in the }\lambda_i, \label{23}  \\
& f>0 \text{ in }\Gamma, \quad f = 0 \text{ on }\partial\Gamma, \quad f_{\lambda_i} >0 \text{ in } \Gamma \text{ for }1 \leq i \leq n. \label{24}
\end{align}
By a classical result of Calder\'on \& Zygmund \cite{CZ61}, functions $u\in W^{2,p}_{\operatorname{loc}}(\Omega)$ for $p>n/2$ are pointwise twice differentiable a.e.~in $\Omega$, and so for such functions the equation \eqref{1} is well-defined.

The motivation behind \eqref{1} comes from conformal geometry: if $n\geq 3$ and $g_{ij}=u^{-2}\delta_{ij}$ is a metric conformal to the Euclidean metric on $\Omega$, then $A_g\defeq u^{-1}A_u$ is the $(0,2)$-Schouten tensor of $g$, which arises in the Ricci decomposition of the Riemann curvature tensor and is given by the formula
\begin{equation}\label{11}
A_g = \frac{1}{n-2}\bigg(\operatorname{Ric}_g - \frac{R_g}{2(n-1)}g\bigg).
\end{equation}
Here, $\operatorname{Ric}_g$ and $R_g$ denote the Ricci tensor and scalar curvature of the metric $g$, respectively. In the Euclidean setting, the quantity $A_u$ plays a central role in the characterisation of conformally invariant operators on $\mathbb{R}^n$ in dimensions $n\geq 3$ (see \cite{LL03}), and in the characterisation of M\"obius-invariant operators in $\mathbb{R}^2$ (see \cite{LLL21}). 

Of particular interest is \eqref{1} in the case $(f,\Gamma)=(\sigma_k^{1/k},\Gamma_k^+)$ for $1\leq k\leq n$, where
\begin{equation*}
\sigma_k(\lambda_1,\dots,\lambda_n) \defeq \sum_{1\leq {i_1}<\dots<{i_k}\leq n} \lambda_{i_1}\cdots\lambda_{i_k}
\end{equation*}
is the $k$'th elementary symmetric polynomial and 
\begin{equation*} 
\Gamma_k^+ = \{\lambda=(\lambda_1,\dots,\lambda_n)\in\mathbb{R}^n:\sigma_j(\lambda)>0~\text{for all }1\leq j\leq k\}.
\end{equation*} 

\noindent When $k=1$ and $\psi(x,z) = z^{-1}$, \eqref{1} is the Yamabe equation in the case of positive scalar curvature on Euclidean domains. For $k\geq 2$, \eqref{1} is fully nonlinear and encompasses the so-called $\sigma_k$-Yamabe equation (where $\psi(x,z)=z^{-1}$) on Euclidean domains, whose study on Riemannian manifolds was initiated by Viaclovsky in \cite{Via00a}. Note that, for general $f$ satisfying \eqref{24}, \eqref{1} is an elliptic equation, although non-uniformly elliptic \textit{a priori}. Fully nonlinear elliptic equations involving the eigenvalues of the Hessian were first considered in \cite{CNS3}.

 Important in the study of \eqref{1} are local first and second derivative estimates on solutions. Such \textit{a priori} estimates (depending on $C^0$ bounds) have been established for the $\sigma_k$-Yamabe equation and other equations of the form \eqref{1} on general Riemannian manifolds by Chen \cite{Che05}, Guan \& Wang \cite{GW03b}, Jin, Li \& Li \cite{JLL07}, Li \& Li \cite{LL03}, Li \cite{Li09} and Wang \cite{Wan06}, for example (see also the work of Viaclovsky \cite{Via02} for global estimates). On the other hand, the regularity theory for fully nonlinear Yamabe-type equations is less well-developed; for a partial list of works addressing related regularity problems, see \cite{CHY05, Gonz05, Gonz06, LN20b, LNX22, DN21, DN22}. 
 
  In joint work with Nguyen in \cite{DN21}, we studied the regularity of $W^{2,p}_{\operatorname{loc}}(\Omega)$ solutions to \eqref{1} in the case $(f,\Gamma) = (\sigma_k^{1/k},\Gamma_k^+)$, assuming $2\leq k \leq n$ and $p>kn/2$. The purpose of this paper is to both weaken the regularity assumptions in \cite{DN21} and extend the scope of the regularity theory to more general operators $f$. In addition to \eqref{23} and \eqref{24}, we introduce one more condition on $(f,\Gamma)$, which is related to the lower bound on the Sobolev exponent $p$ that we will impose on our solution $u\in W^{2,p}_{\operatorname{loc}}(\Omega)\cap C^{0,1}_{\operatorname{loc}}(\Omega)$. As we will see, this condition is satisfied by the $\sigma_k$ operators, their quotients and other important examples. 
  
  To formulate this condition, we fix $(f,\Gamma)$ satisfying \eqref{21}--\eqref{24}, and for a symmetric matrix $A$, we denote by $F(A)$ the matrix with entries 
 \begin{equation*}
 F(A)^{ij} = \frac{\partial }{\partial A_{ij}}f(\lambda(A)). 
 \end{equation*}
 Note that by \eqref{24}, $F(A)$ is positive definite if $\lambda(A)\in\Gamma$. Our condition is then as follows: there exist constants $C> 0$ and $\gamma\geq 0$ (depending only on $(f,\Gamma)$) such that
 \begin{equation}\label{26}
 \frac{[\operatorname{tr}(F(A))]^n}{\det (F(A))} \leq C \bigg(\frac{\operatorname{tr}(A)}{f(\lambda(A))}\bigg)^\gamma \quad \text{for all }A \text{ with }\lambda(A)\in\Gamma.\medskip 
 \end{equation}
 Before stating our main result, we give some examples of $(f,\Gamma)$ satisfying \eqref{26} for some $C>0$ and $\gamma\geq 0$:
 \begin{Ex}\label{50}
 	When $(f,\Gamma) = ((\sigma_k/\sigma_l)^{1/(k-l)},\Gamma_k^+)$ for some $0 \leq l < k \leq n$ and $k\geq 2$ (with the convention that $\sigma_0 =1$), \eqref{26} is satisfied with $\gamma = (k-1)\max\{k-l, 2\}$ (see \cite[Proposition 4.2]{BL} for a proof of this fact). In particular, when $(f,\Gamma) = (\sigma_k^{1/k},\Gamma_k^+)$ for $2\leq k \leq n$, \eqref{26} is satisfied with $\gamma = k(k-1)$. 
 \end{Ex}

\begin{Ex}\label{51}
	If, in addition to \eqref{21} and \eqref{22}, the cone $\Gamma$ satisfies $(1,0,\dots,0)\in\Gamma$, then \eqref{26} is satisfied with $\gamma =0$. This follows immediately from \cite[Proposition A.1]{LN20}, which asserts the existence of a constant $\nu\in(0,1)$ such that 
	\begin{equation*}
	\frac{\partial f}{\partial\lambda_i}(\lambda) \geq \nu \sum_{j=1}^n\frac{\partial f}{\partial \lambda_j}(\lambda)\quad\text{for all }i=1,\dots,n \text{ and }\lambda\in\Gamma,
	\end{equation*}
	and the fact that $\frac{\partial f}{\partial \lambda_i}$ are precisely the eigenvalues of $F$. 
\end{Ex}

 Our main result in this paper is as follows: 
 
 \begin{thm}\label{A}
 	Let $\Omega$ be a domain in $\mathbb{R}^n$ ($n\geq 2$) and let $\psi=\psi(x,z)\in C^{1,1}_{\operatorname{loc}}(\Omega\times\mathbb{R})$ be a positive function. Suppose that $(f,\Gamma)$ satisfies \eqref{21}--\eqref{24} and assume there exist constants $C>0$ and $\gamma\geq 0$ such that \eqref{26} holds. Then any positive solution $u\in W^{2,p}_{\operatorname{loc}}(\Omega)\cap C^{0,1}_{\operatorname{loc}}(\Omega)$ to \eqref{1} with
 	\begin{equation*}
 	\begin{cases}
 	p = n & \text{if }\gamma<n \\
 	p > \gamma & \text{if }\gamma \geq n
 	\end{cases}
 	\end{equation*}
 	belongs to $C^{1,1}_{\operatorname{loc}}(\Omega)$. Moreover, for any concentric balls $B_R\subset B_{3R}\Subset\Omega$ there exists a constant $C$ depending only on $n, p, R, \psi, f, \Gamma$ and an upper bound for $\|\ln u\|_{C^{0,1}(B_{3R})}+ \|\nabla^2 u\|_{L^p(B_{3R})}$ such that 
 	\begin{equation}\label{28}
 	\|\nabla^2 u \|_{L^\infty(B_{R})}\leq C. 
 	\end{equation}
 \end{thm}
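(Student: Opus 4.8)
The plan is to obtain the estimate \eqref{28} by a contradiction-compactness argument against a pointwise interior $C^{1,1}$ bound, with the heart of the matter being a local $W^{2,p}\to C^{1,1}$ regularity bootstrap. First I would reduce to the case where $u$ is normalised (say $\|\ln u\|_{C^{0,1}(B_{3R})}$ and $\|\nabla^2 u\|_{L^p(B_{3R})}$ bounded by a fixed constant), and use the conformal structure: writing $g = u^{-2}\delta$, the equation \eqref{1} becomes $f(\lambda(A_g)) = u^{-1}\psi(x,u)$ where $A_g = u^{-1}A_u$ is the Schouten tensor. The key analytic input is that, since $u\in W^{2,p}_{\mathrm{loc}}$ with $p$ large enough (by Calderón–Zygmund $p>n/2$ suffices for a.e.\ second differentiability, but we need more), $u$ is a \emph{strong} solution and we may treat \eqref{1} as a fully nonlinear PDE with measurable-in-$x$ coefficients, linearising along the solution. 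Concretely, differentiating formally (or working with difference quotients) in a direction $e$, the function $w = \partial_e u$ satisfies a linear equation $F(A_u)^{ij}\partial_{ij} w + (\text{lower order}) = \partial_e(\psi(x,u))$ in the viscosity/strong sense, with $F(A_u)$ positive definite but possibly degenerate.

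The crucial step is to exploit condition \eqref{26} to control the ellipticity. The quantity $[\operatorname{tr} F(A_u)]^n/\det F(A_u)$ measures the anisotropy of the linearised operator, and \eqref{26} bounds it by a power $\gamma$ of $\operatorname{tr}(A_u)/f(\lambda(A_u)) = \operatorname{tr}(A_u)/\psi$. Since $\operatorname{tr}(A_u) = \Delta u - \tfrac{n}{2u}|\nabla u|^2$, and $|\nabla u|$ is bounded by the $C^{0,1}$ hypothesis while $\Delta u \in L^p$, the right-hand side of \eqref{26} lies in $L^{p/\gamma}$. When $\gamma < n$ and $p = n$ this gives an $L^{n/\gamma}$-bound with $n/\gamma > 1$; when $\gamma \geq n$ and $p > \gamma$ this gives $L^{p/\gamma}$ with $p/\gamma > 1$. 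Either way, the anisotropy ratio of the linearised operator is in $L^q$ for some $q > 1$. This is precisely the regime in which one can run a De Giorgi–Nash–Moser or Krylov–Safonov-type iteration for non-uniformly elliptic equations: following the strategy of Bao–Chen–Guan–Ji and of Duncan–Nguyen \cite{DN21}, one derives an integral Harnack / local maximum principle for $\operatorname{tr}(A_u)$ (or for $\Delta u$) by testing the linearised equation against suitable powers, where the $L^q$-bound on $[\operatorname{tr} F]^n/\det F$ lets one absorb the degeneracy via the arithmetic-geometric mean inequality $\det F^{1/n} \leq \tfrac1n \operatorname{tr} F$ and a reverse Hölder argument. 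This upgrades $\nabla^2 u\in L^p$ to $\nabla^2 u\in L^\infty_{\mathrm{loc}}$, i.e.\ $u\in C^{1,1}_{\mathrm{loc}}$, and yields the quantitative bound \eqref{28}.

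I expect the main obstacle to be the rigorous treatment of the linearised equation for a merely $W^{2,p}$ (not $C^2$) solution: one cannot naively differentiate, so the differentiated equation must be interpreted through incremental quotients $u^h(x) = u(x+he)$, using concavity of $f$ (from \eqref{23}) to get $F(A_u)^{ij}\partial_{ij}(u^h - u)/h \geq (\text{measurable RHS})$ as a one-sided strong inequality, then passing to the limit. A secondary difficulty is handling the zeroth-order term $-\tfrac{|\nabla u|^2}{2u}I$ in $A_u$, which contributes terms involving $\nabla u \otimes \nabla^2 u$; these are controlled using the $C^{0,1}$ bound on $u$ and the positive lower bound on $u$, but they must be carefully tracked through the iteration to ensure the constant in \eqref{28} depends only on the stated quantities. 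Finally, once the a priori bound is in hand, a standard approximation/mollification argument together with the contradiction-compactness scheme (rescaling around a point where $|\nabla^2 u|$ is large, extracting a limit solving a limiting equation of the same type on $\mathbb{R}^n$, and contradicting a Liouville-type statement or the interior bound itself) promotes the estimate from a priori to unconditional, giving $u\in C^{1,1}_{\mathrm{loc}}(\Omega)$.
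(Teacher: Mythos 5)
Your proposal correctly identifies several of the key ingredients: working with incremental/difference quotients because $u$ is only $W^{2,p}$, using concavity of $f$ to extract a one-sided linear inequality for second differences, and exploiting \eqref{26} to place the anisotropy ratio $[\operatorname{tr} F]^n/\det F$ in some $L^q$ with $q>1$ (your arithmetic for the two cases $\gamma<n$, $\gamma\geq n$ matches the choice $\beta = 2n/(p-\gamma)$ used in the paper). However, the mechanism you propose to convert this into an $L^\infty$ bound is not the one that works here, and it contains a genuine gap. You describe ``testing the linearised equation against suitable powers'' and ``a reverse H\"older argument'' --- this is the Moser/De Giorgi iteration scheme, which requires putting the equation in divergence form. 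That structure exists for $(\sigma_k^{1/k},\Gamma_k^+)$ on Euclidean domains (and was the basis of Duncan--Nguyen \cite{DN21} and Urbas \cite{Urb00,Urb01}), but it is \emph{not} available for general $(f,\Gamma)$, and the paper explicitly abandons it for precisely this reason. The correct mechanism --- which is what Bao--Chen--Guan--Ji \cite{BCGJ03} (whom you cite) actually use --- is the Alexandrov--Bakelman--Pucci estimate: one sets $v=\eta\,\Delta^h_{\xi\xi}u$ for a power cutoff $\eta$, proves a pointwise differential inequality for $-F^{ij}\nabla_i\nabla_j v$, works on the upper contact set of $v$ to bound $|\nabla(\Delta^h_{\xi\xi}u)|$ by $v/(R\eta^{1/\beta})$, divides by $(\det F)^{1/n}$ to invoke \eqref{26}, and then applies ABP. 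You mention Krylov--Safonov in passing, but then immediately revert to the integral-iteration description, so the ABP route is never actually set in motion.

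Two further issues. First, the concluding contradiction-compactness/blow-up/Liouville step is unnecessary and its feasibility is dubious: once the ABP argument gives a quantitative upper bound on $\Delta^h_{\xi\xi} u$ on $B_R$ depending only on $\|\ln u\|_{C^{0,1}(B_{3R})}$ and $\|\nabla^2 u\|_{L^p(B_{3R})}$, that \emph{is} already the unconditional estimate \eqref{28}; no rescaling or Liouville theorem is needed or invoked. Second, you never address how a one-sided bound on $\Delta^h_{\xi\xi}u$ (hence on $\Delta u$) yields the full two-sided Hessian bound; the paper does this by combining the structural lower bound $\Delta u > $ (lower order) coming from $\lambda(A_u)\in\Gamma\subseteq\Gamma_1^+$ with the polarisation identity $\nabla_i\nabla_j u = \tfrac12(2\nabla_\xi\nabla_\xi u - \nabla_i\nabla_i u - \nabla_j\nabla_j u)$ for $\xi = (e_i+e_j)/\sqrt{2}$. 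Without this step the argument only bounds $\Delta u$ from above, not $\|\nabla^2 u\|_{L^\infty}$.
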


\begin{rmk}
	Once the estimate \eqref{28} is established, \eqref{1} becomes uniformly elliptic, and $C_{\operatorname{loc}}^{2,\alpha}(\Omega)$ regularity follows from the concavity assumption in \eqref{23} and the regularity theory of Evans-Krylov \cite{Ev82, Kry82} (see also \cite{CC95}). Schauder estimates then yield $C_{\operatorname{loc}}^{3,\alpha}(\Omega)$ regularity for $u$, which can be bootstrapped in the usual way if one assumes additional regularity on $\psi$. 
\end{rmk}

\begin{rmk}
	In light of Morrey's embedding theorem, the $C^{0,1}_{\operatorname{loc}}(\Omega)$ assumption on $u$ in Theorem \ref{A} is superfluous when $\gamma\geq n$. 
\end{rmk}

\begin{rmk}
	We refer the reader to \cite[Appendix A]{DN21} for an example (motivated by previous work of Chang, Gursky \& Yang \cite{CGY02a}) in which the existence of a $W^{2,p}$-strong solution to a fully nonlinear Yamabe-type equation is obtained, and for which our regularity theory in Theorem \ref{A} (more precisely, Corollary \ref{44}) is applicable.
\end{rmk}

As mentioned above, in \cite[Theorem 1.1]{DN21} we proved the $C^{1,1}_{\operatorname{loc}}(\Omega)$ regularity of positive solutions $u\in W^{2,p}_{\operatorname{loc}}(\Omega)$ to \eqref{1} for $(f,\Gamma) = (\sigma_k^{1/k},\Gamma_k^+)$, assuming $2\leq k \leq n$ and $p>kn/2$. In light of this result, Example \ref{50} and Theorem \ref{A}, we therefore have:

\begin{cor}\label{44}
	Let $\Omega$ be a domain in $\mathbb{R}^n$ ($n\geq 2$), let $\psi=\psi(x,z)\in C^{1,1}_{\operatorname{loc}}(\Omega\times\mathbb{R})$ be a positive function and suppose $(f,\Gamma) = (\sigma_k^{1/k},\Gamma_k^+)$ for some $2\leq k \leq n$. Then any positive solution $u\in W^{2,p}_{\operatorname{loc}}(\Omega)\cap C^{0,1}_{\operatorname{loc}}(\Omega)$ to \eqref{1} with
	\begin{equation*}
	\begin{cases}
	p = n & \text{if } k(k-1)<n \\
	p > \min \big(k(k-1),\frac{kn}{2}\big) & \text{if }k(k-1) \geq n
	\end{cases}
	\end{equation*} 
	belongs to $C^{1,1}_{\operatorname{loc}}(\Omega)$. 
\end{cor}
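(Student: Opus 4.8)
The plan is to obtain Corollary \ref{44} by combining Theorem \ref{A}, the value of $\gamma$ recorded in Example \ref{50}, and the earlier result \cite[Theorem 1.1]{DN21}; no new analysis is required beyond a short case distinction. First I would apply Example \ref{50}: when $(f,\Gamma)=(\sigma_k^{1/k},\Gamma_k^+)$ with $2\le k\le n$, the condition \eqref{26} holds with $\gamma=k(k-1)$. It then remains to split according to the sizes of $n$, $k(k-1)$ and $\tfrac{kn}{2}$.

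If $k(k-1)<n$, then $\gamma=k(k-1)<n$, so the first alternative in the hypothesis of Theorem \ref{A} is exactly $p=n$, and Theorem \ref{A} gives $u\in C^{1,1}_{\operatorname{loc}}(\Omega)$. If $k(k-1)\ge n$, the assumption $p>\min\{k(k-1),\tfrac{kn}{2}\}$ forces at least one of $p>k(k-1)$ or $p>\tfrac{kn}{2}$ to hold. In the first case, since also $\gamma=k(k-1)\ge n$, we are in the second alternative of Theorem \ref{A} (namely $p>\gamma$), which again yields $u\in C^{1,1}_{\operatorname{loc}}(\Omega)$. In the second case, $u\in W^{2,p}_{\operatorname{loc}}(\Omega)$ with $p>\tfrac{kn}{2}$, so \cite[Theorem 1.1]{DN21} applies directly and gives $u\in C^{1,1}_{\operatorname{loc}}(\Omega)$; here the extra hypothesis $u\in C^{0,1}_{\operatorname{loc}}(\Omega)$ is in any case automatic by Morrey's embedding, since $p>\tfrac{kn}{2}\ge n$. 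These cases are exhaustive.

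There is no genuine obstacle here: the proof amounts to checking that the two available thresholds, $p>k(k-1)$ coming from Theorem \ref{A} and $p>\tfrac{kn}{2}$ coming from \cite[Theorem 1.1]{DN21}, jointly cover every admissible $p$ in the range $k(k-1)\ge n$ — which is immediate — while the range $k(k-1)<n$ is handled entirely by Theorem \ref{A}. All of the substance lies in the proof of Theorem \ref{A} itself.
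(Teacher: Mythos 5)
Your proposal is correct and matches the paper's own (brief) justification: Corollary~\ref{44} is obtained by combining Theorem~\ref{A} with $\gamma=k(k-1)$ from Example~\ref{50} and the earlier threshold $p>kn/2$ from \cite[Theorem 1.1]{DN21}, exactly as you describe. The case distinction and the Morrey-embedding remark are both sound.
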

\begin{rmk}
	It is easy to check that Theorem \ref{A} yields an improvement on the lower bound for $p$ assumed in \cite[Theorem 1]{DN21} when $k < \frac{n}{2}+1$. It would be interesting to determine the sharp lower bounds on $p$ for which one can obtain $C^{1,1}_{\operatorname{loc}}(\Omega)$ regularity when $(f,\Gamma) = (\sigma_k^{1/k},\Gamma_k^+)$. 
\end{rmk}

Theorem \ref{A} also encompasses examples that were not addressed in \cite{DN21} (see below for a more detailed discussion on the differences between \cite{DN21} and the present work). The first of these are the $\sigma_k$-quotient equations -- by Example \ref{50} and Theorem \ref{A}, we immediately obtain:

\begin{cor}
	Let $\Omega$ be a domain in $\mathbb{R}^n$ ($n\geq 2$), let $\psi=\psi(x,z)\in C^{1,1}_{\operatorname{loc}}(\Omega\times\mathbb{R})$ be a positive function and suppose $(f,\Gamma) = ((\sigma_k/\sigma_l)^{1/(k-l)},\Gamma_k^+)$ for some $1 \leq l < k \leq n$. Then any positive solution $u\in W^{2,p}_{\operatorname{loc}}(\Omega)\cap C^{0,1}_{\operatorname{loc}}(\Omega)$ to \eqref{1} with
	\begin{equation*}
	\begin{cases}
	p = n & \text{if } (k-1)\max\{k-l, 2\}<n \\
	p > (k-1)\max\{k-l, 2\} & \text{if }(k-1)\max\{k-l, 2\} \geq n
	\end{cases}
	\end{equation*} 
	belongs to $C^{1,1}_{\operatorname{loc}}(\Omega)$. 
\end{cor}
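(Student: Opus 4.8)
The plan is to deduce the statement directly from Theorem \ref{A} together with Example \ref{50}; the only real content is checking that the pair $(f,\Gamma) = ((\sigma_k/\sigma_l)^{1/(k-l)},\Gamma_k^+)$ falls within the hypotheses of Theorem \ref{A}, after which the conclusion is immediate.

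First I would verify that this $(f,\Gamma)$ satisfies the structural conditions \eqref{21}--\eqref{24}. The cone $\Gamma_k^+$ is open, convex, connected and symmetric with vertex at $0$, and satisfies $\Gamma_n^+\subseteq\Gamma_k^+\subseteq\Gamma_1^+$; these are classical facts from G\r{a}rding's theory of hyperbolic polynomials (see also \cite{CNS3}). On $\Gamma_k^+$ one has $\sigma_l>0$ for $l<k$, so $f = (\sigma_k/\sigma_l)^{1/(k-l)}$ is well-defined, smooth, positive, $1$-homogeneous and symmetric in the $\lambda_i$; it extends continuously to $\overline{\Gamma_k^+}$ with $f=0$ on $\partial\Gamma_k^+$, its first partials $f_{\lambda_i}$ are strictly positive in $\Gamma_k^+$, and it is concave there -- the concavity of $(\sigma_k/\sigma_l)^{1/(k-l)}$ on $\Gamma_k^+$ being a standard result (see \cite{CNS3} and the subsequent literature). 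Thus \eqref{21}--\eqref{24} hold.

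Next, since $1\leq l < k\leq n$ forces $k\geq 2$, Example \ref{50} applies and gives that \eqref{26} holds with some $C>0$ and $\gamma = (k-1)\max\{k-l,2\}$. With this $\gamma$ fixed, Theorem \ref{A} yields precisely the asserted conclusion: any positive $u\in W^{2,p}_{\operatorname{loc}}(\Omega)\cap C^{0,1}_{\operatorname{loc}}(\Omega)$ solving \eqref{1} belongs to $C^{1,1}_{\operatorname{loc}}(\Omega)$ provided $p = n$ when $(k-1)\max\{k-l,2\}<n$ and $p>(k-1)\max\{k-l,2\}$ when $(k-1)\max\{k-l,2\}\geq n$.

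I do not expect a genuine obstacle here: the entire analytic difficulty is absorbed into Theorem \ref{A} (and into the verification of \eqref{26} for quotients carried out in \cite{BL}). The only point meriting mild care is confirming that the dichotomy on $p$ in Theorem \ref{A} transfers verbatim under the substitution $\gamma = (k-1)\max\{k-l,2\}$, which it does.
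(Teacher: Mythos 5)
Your argument is correct and is essentially the one the paper uses: the corollary is deduced by combining Example \ref{50} (which gives \eqref{26} with $\gamma = (k-1)\max\{k-l,2\}$ for the quotient operators) with Theorem \ref{A}. The additional verification of \eqref{21}--\eqref{24} is a standard but welcome sanity check; nothing more is needed.
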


We also have the following consequence of Example \ref{51} and Theorem \ref{A}:

\begin{cor}\label{D}
	Let $\Omega$ be a domain in $\mathbb{R}^n$ ($n\geq 2$), let $\psi=\psi(x,z)\in C^{1,1}_{\operatorname{loc}}(\Omega\times\mathbb{R})$ be a positive function and suppose $(f,\Gamma)$ satisfies \eqref{21}--\eqref{24} and $(1,0,\dots,0)\in\Gamma$. Then any positive solution $u\in W^{2,n}_{\operatorname{loc}}(\Omega)\cap C^{0,1}_{\operatorname{loc}}(\Omega)$ to \eqref{1} belongs to $C^{1,1}_{\operatorname{loc}}(\Omega)$. 
\end{cor}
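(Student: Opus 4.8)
Since $(1,0,\dots,0)\in\Gamma$, Example \ref{51} tells us that \eqref{26} holds with $\gamma=0$ (indeed $\det F(A)=\prod_i\tfrac{\partial f}{\partial\lambda_i}(\lambda(A))\geq\nu^n(\operatorname{tr}F(A))^n$ by \cite[Proposition A.1]{LN20} and the fact that the eigenvalues of $F$ are the $\tfrac{\partial f}{\partial\lambda_i}$). Since $\gamma=0<n$, the hypothesis $u\in W^{2,n}_{\operatorname{loc}}(\Omega)\cap C^{0,1}_{\operatorname{loc}}(\Omega)$ is precisely what Theorem \ref{A} requires with $p=n$ (and the $C^{0,1}_{\operatorname{loc}}$ assumption is genuinely needed, as $\gamma<n$), so Corollary \ref{D} is immediate from Theorem \ref{A}. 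All the content therefore lies in Theorem \ref{A}, and I outline a proof of that.

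The plan for Theorem \ref{A} is to prove \eqref{28} first when $u$ is smooth---an \emph{a priori} estimate with the stated dependence---and then to reach general strong solutions by a smooth approximation (as in \cite{DN21}). Write $\mathcal{L}v\defeq F(A_u(x))^{ij}\partial_{ij}v$ for the linearised operator, positive definite by \eqref{24} but possibly highly degenerate. Two reductions are used. First, it suffices to bound $\sup_{B_R}\lambda_{\max}(A_u)$ from above: since $\lambda(A_u)\in\Gamma\subseteq\Gamma_1^+$ one has $\operatorname{tr}(A_u)>0$ a.e., so a bound $\lambda_{\max}(A_u)\leq C$ confines every eigenvalue of $A_u$ to $[-(n-1)C,C]$, whence $|A_u|\leq C$ and then $\nabla^2u=A_u+\tfrac{|\nabla u|^2}{2u}I$ is controlled by $C$ and $\|\ln u\|_{C^{0,1}(B_R)}$. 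Second, $\psi(\cdot,u)\geq c_0>0$ on $B_{3R}$ (as $u$ is continuous with $\|\ln u\|_{C^{0,1}(B_{3R})}$ bounded and $\psi>0$), while $0<\operatorname{tr}(A_u)\leq\Delta u\leq\sqrt n\,|\nabla^2u|$, so $\operatorname{tr}(A_u)\in L^p(B_{3R})$; this is what will feed \eqref{26}.

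To bound $\lambda_{\max}(A_u)$, fix a unit vector $e$, a cutoff $\eta\in C^\infty_c(B_{2R})$ with $\eta\equiv1$ on $B_R$, and constants $a\geq2$, $\mu>0$ to be chosen, and set $w\defeq\eta^a(A_u)_{ee}e^{\mu|\nabla u|^2}$. For smooth $u$, differentiating \eqref{1} once and twice along $e$, using Euler's identity $F(A)^{ij}A_{ij}=f(\lambda(A))$ ($1$-homogeneity, \eqref{23}) and the concavity in \eqref{23} to discard the non-positive third-order term $(\partial_eF(A_u)^{ij})\partial_e(A_u)_{ij}$, one obtains a pointwise lower bound for $\mathcal{L}((A_u)_{ee})$ in which the only term not manifestly controlled is $-\tfrac1uF(A_u)^{ij}u_{ik}u_{jk}$, coming from the conformal correction $-\tfrac{|\nabla u|^2}{2u}I$; this is absorbed by the term $\mu F(A_u)^{ij}u_{ik}u_{jk}$ produced by $\mathcal{L}(e^{\mu|\nabla u|^2})$ once $\mu$ is large depending on $\inf_{B_{3R}}u$, the cutoff terms being handled by $a\geq2$ and the residual first-order terms by Cauchy--Schwarz. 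The outcome is a differential inequality of the shape $\mathcal{L}w\geq c_1\operatorname{tr}(F(A_u))w-C\operatorname{tr}(F(A_u))$ a.e.\ in $B_{2R}$ (with $c_1\geq0$ and $C$ depending only on $n,\psi,f,\Gamma,\|\ln u\|_{C^{0,1}(B_{3R})}$), so $(\mathcal{L}w)^-\leq C\operatorname{tr}(F(A_u))$ a.e.\ on $\{w>0\}$. Since $w$ vanishes near $\partial B_{2R}$, the Alexandrov--Bakelman--Pucci estimate gives $\sup_{B_{2R}}w\leq C(n)R\,\|(\mathcal{L}w)^-/(\det F(A_u))^{1/n}\|_{L^n(\{w>0\})}$, and here \eqref{26} enters:
\begin{equation*}
\frac{(\mathcal{L}w)^-}{(\det F(A_u))^{1/n}}\leq C\bigg(\frac{[\operatorname{tr}(F(A_u))]^n}{\det F(A_u)}\bigg)^{1/n}\leq C\bigg(\frac{\operatorname{tr}(A_u)}{f(\lambda(A_u))}\bigg)^{\gamma/n}\leq Cc_0^{-\gamma/n}(\operatorname{tr}(A_u))^{\gamma/n}\quad\text{on }\{w>0\}.
\end{equation*}
As $\operatorname{tr}(A_u)\in L^p(B_{3R})$ with $p>\gamma$ in every case of the theorem (and $\gamma=0$ giving an $L^\infty$ bound), the right-hand side lies in $L^q(B_{3R})$ for some $q>n$, with norm controlled by $\|\nabla^2u\|_{L^p(B_{3R})}$ and $c_0$; hence $\sup_{B_{2R}}w\leq C$, which on $B_R$ forces $(A_u)_{ee}\leq C$ for every unit $e$, i.e.\ $\lambda_{\max}(A_u)\leq C$, and \eqref{28} follows from the first reduction. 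The general case is obtained by running this scheme after a smooth approximation, so that the \emph{a priori} bound---depending only on the quantities in the statement---survives in the limit.

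I expect the main obstacles to be: (i) the derivation of the differential inequality for $w$---one must differentiate the non-uniformly elliptic equation \eqref{1} twice and use concavity \eqref{23} and positivity of $F$ \eqref{24} just enough to reduce every term to a favourable quadratic form or to $\operatorname{tr}(F(A_u))$ times a quantity controlled by $\|\ln u\|_{C^{0,1}(B_{3R})}$, all while tracking the conformal correction $-\tfrac{|\nabla u|^2}{2u}I$; the interaction of that correction with the possibly severe degeneracy of $\mathcal{L}$ is the delicate point, and taming it is exactly why \eqref{26}, together with the $L^p$-bound on $\nabla^2u$, is invoked in the ABP step (this is also what pins down the admissible range of $p$); and (ii) the passage from the smooth \emph{a priori} estimate to genuine $W^{2,p}$-strong solutions, since the auxiliary function $w$ is only as regular as $\nabla^2u\in L^p$, so the ABP estimate cannot be applied to it directly and one needs a careful regularisation procedure (or an appropriate strong-solution form of the Alexandrov--Bakelman--Pucci estimate), along the lines developed in \cite{DN21}.
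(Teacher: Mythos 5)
Your first paragraph is exactly the paper's derivation of Corollary \ref{D}: Example \ref{51} gives \eqref{26} with $\gamma=0$, hence $\gamma<n$ and Theorem \ref{A} applies with $p=n$. That part is correct and needs nothing further.

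The outline of Theorem \ref{A} that you then give, however, takes a genuinely different route from the paper, and the difference is not cosmetic. You propose to prove an \emph{a priori} bound for smooth solutions via the classical test function $w=\eta^a(A_u)_{ee}e^{\mu|\nabla u|^2}$, differentiating the equation twice in $e$ and absorbing the conformal term with the exponential weight, and then to pass to $W^{2,p}$ strong solutions ``by a smooth approximation (as in \cite{DN21}).'' This last step is not a delicate point to be tidied up later; it is a genuine gap. There is no known mollification or regularisation scheme that turns a $W^{2,p}$-strong solution of \eqref{1} into a sequence of smooth solutions (or near-solutions) of a nearby equation to which your \emph{a priori} estimate applies: the equation is degenerate, fully nonlinear, and non-uniformly elliptic, and mollifying $u$ does not even approximately preserve the constraint $\lambda(A_u)\in\Gamma$. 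Moreover, \cite{DN21} does \emph{not} proceed by smooth approximation -- it works directly with second-order difference quotients of the $W^{2,p}$ solution, and so does this paper. That is precisely the point of the test function $v=\eta\,\Delta^h_{\xi\xi}u$ used here: since $u\in C^{0,1}_{\operatorname{loc}}$, the function $\Delta^h_{\xi\xi}u$ is \emph{bounded} and $v\in W^{2,p}_{\operatorname{loc}}\cap C^0$, so the ABP estimate of Theorem \ref{alex} applies to it directly, whereas your $w$ involves $(A_u)_{ee}\in L^p$ and its ``second derivatives'' involve fourth derivatives of $u$, which do not exist even distributionally in a useful sense. In place of differentiating the equation twice, the paper obtains its differential inequality (Lemma \ref{t26}) purely from the concavity inequality \eqref{17} applied at $x\pm h\xi$ and a discrete Bochner identity, and replaces the role of the exponential weight by the contact-set gradient bound of Lemma \ref{37} together with the very specific cutoff $\eta=(1-|x|^2/4R^2)^\beta$, whose exponent $\beta$ is later tuned to $2n/(p-\gamma)$. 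If you want to salvage your route you would need a substitute for the approximation step; otherwise you should recast the argument in terms of difference quotients from the start.
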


As shown in \cite[Appendix A]{DN22}, if $\Gamma$ satisfies \eqref{21} and \eqref{22}, then $(1,0,\dots,0)\in\Gamma$ if and only if $\Gamma = (\widetilde{\Gamma})^\tau$ for some $\tau\in(0,1)$ and some $\widetilde{\Gamma}$ satisfying \eqref{21} and \eqref{22}, where $(\widetilde{\Gamma})^\tau = \{\lambda\in\mathbb{R}^n:\tau\lambda+(1-\tau)\sigma_1(\lambda)e\in\widetilde{\Gamma}\}$ and $e = (1,\dots,1)$. For $-\infty<t<1$, define the trace-modified Schouten tensor \medskip
\begin{equation*}
A_g^t = \frac{1}{n-2}\bigg(\operatorname{Ric}_g - \frac{tR_g}{2(n-1)}g\bigg)\medskip
\end{equation*}
(this quantity was introduced independently by Li \& Li in \cite{LL03} and Gursky \& Viaclovsky in \cite{GV03}). Then $A_g^t = \tau^{-1}[\tau A_g + (1-\tau)\sigma_1(g^{-1}A_g)g]$ for $\tau = (1+ \frac{1-t}{n-2})^{-1}\in (0,1)$, and hence Corollary \ref{D} encompasses the so-called trace-modified $\sigma_k$-Yamabe equation on Euclidean domains; in terms of the conformal factor $u$, this corresponds to considering \eqref{1} with $\psi(x,z) =z^{-1}$, $(f,\Gamma) = (\sigma_k^{1/k},\Gamma_k^+)$ and $A_u$ replaced by 
\begin{equation*}
A_u^t = \nabla^2 u + \frac{1-t}{n-2}\Delta u\,I - \frac{2n-t n -2}{n-2}\frac{|\nabla u|^2}{2u}I.
\end{equation*}

For the remainder of the introduction, we briefly compare the methods of the present paper and those of \cite[Theorem 1.1]{DN21}. Our arguments in \cite{DN21} used the divergence structure of the $\sigma_k$-Yamabe equation on Euclidean domains, and consisted of an integrability improvement argument followed by Moser iteration. Similar methods were previously utilised by Urbas \cite{Urb00, Urb01} in proving $C^{1,1}_{\operatorname{loc}}(\Omega)$ regularity of $W^{2,p}_{\operatorname{loc}}(\Omega)$ solutions to $k$-Hessian equations. On the other hand, our proof of Theorem \ref{A} does not rely on any divergence structure, and instead involves an application of the Alexandrov-Bakelman-Pucci (ABP) estimate, as inspired by the work of Bao et.~al.~\cite{BCGJ03} on quotient Hessian equations (see also \cite{BL}). 

We point out that, although $-\frac{|\nabla u|^2}{2u}I$ is a lower order term in the definition of $A_u$, its presence leads to terms in our estimates which are formally of third order and must be dealt with carefully. In \cite{DN21}, these terms were dealt with via a delicate cancellation phenomenon, which used the aforementioned divergence structure of the $\sigma_k$-Yamabe equation on Euclidean domains. In the proof of Theorem \ref{A}, we estimate the third order terms more directly by instead appealing to properties of the concave envelope of a suitable function (involving second order difference quotients of our solution $u$) and a discrete Bochner-type formula (which we previously derived in \cite{DN21}). The details will be provided in Section \ref{3}. \newline

\noindent \textbf{Acknowledgements:} The author would like to thank Luc Nguyen for helpful discussions regarding this work. Part of this work was carried out whilst the author was supported by EPSRC grant number EP\!/L015811\!/1.

\section{Proof of Theorem \ref{A}}\label{3}

In this section we prove Theorem \ref{A}. In fact we prove a slightly more general statement. To this end, let $H\in C^{1,1}_{\operatorname{loc}}(\Omega\times \mathbb{R})$ be a real -valued function, assumed to be positive or identically zero, and let $J\in C^{1,1}_{\operatorname{loc}}(\Omega\times\mathbb{R};\operatorname{Sym}_n(\mathbb{R}))$ be a symmetric matrix-valued function. Then define
\begin{equation*}
A_{H,J}[u] = \nabla^2 u - H[u]|\nabla u|^2 I + J[u],
\end{equation*}
where $H[u](x) = H(x,u(x))\in\mathbb{R}$ and $J[u](x) = J(x,u(x))\in\operatorname{Sym}_n(\mathbb{R})$. Suppose also that $\psi_1\in C^{1,1}_{\operatorname{loc}}(\Omega\times\mathbb{R})$ is positive and $\psi_2\in C^{1,1}_{\operatorname{loc}}(\Omega\times\mathbb{R})$ is either positive or identically zero, and denote $\psi_i[u](x) = \psi_i(x,u(x))\in\mathbb{R}$ for $i=1,2$. Consider the equation 
\begin{equation}\label{25}
f\big(\lambda(A_{H,J}[u])\big) = \psi_1[u] + \psi_2[u]|\nabla u|^2>0, \quad \lambda(A_{H,J}[u])\in\Gamma \quad\text{a.e. in }\Omega. 
\end{equation}
We prove:

\begin{customthm}{\ref{A}$'$}\label{A'}
\textit{Let $\Omega$ be a domain in $\mathbb{R}^n$ ($n\geq 2$) and let $H,J,\psi_1$ and $\psi_2$ be as above. Suppose that $(f,\Gamma)$ satisfies \eqref{21}--\eqref{24} and assume there exist constants $C>0$ and $\gamma\geq 0$ such that \eqref{26} holds. Then any solution $u\in W^{2,p}_{\operatorname{loc}}(\Omega)\cap C^{0,1}_{\operatorname{loc}}(\Omega)$ to \eqref{1} with
	\begin{equation*}
	\begin{cases}
	p = n & \text{if }\gamma<n \\
	p > \gamma & \text{if }\gamma \geq n
	\end{cases}
	\end{equation*}
	belongs to $C^{1,1}_{\operatorname{loc}}(\Omega)$. Moreover, for any concentric balls $B_R\subset B_{3R}\Subset\Omega$ there exists a constant $C$ depending only on $n, p, R, \psi_1,\psi_2, H, J, f, \Gamma$ and an upper bound for $\| u\|_{C^{0,1}(B_{3R})}+ \|\nabla^2 u\|_{L^p(B_{3R})}$ such that 
	\begin{equation*}
	\|\nabla^2 u \|_{L^\infty(B_{R})}\leq C. 
	\end{equation*}}
\end{customthm}

\begin{rmk}
	Theorem \ref{A} is a special case of Theorem \ref{A'} with $H[u] = \frac{1}{2u}$, $J\equiv 0$ and $\psi_2\equiv0$. 
\end{rmk}

\begin{rmk}
	In the aforementioned works \cite{Urb00, Urb01, BCGJ03, BL}, Hessian equations of the form $(\sigma_k/\sigma_l)^{1/(k-l)}(\lambda(\nabla^2 u(x))) = \psi(x)>0$ are considered (with $l=0$ in \cite{Urb00, Urb01}). As far as the author is aware, Theorem \ref{A'} is new even when $H\equiv 0$, $J\equiv 0$ and $(f,\Gamma) = (\sigma_k^{1/k},\Gamma_k^+)$, since we allow the RHS in \eqref{25} to also depend on $u$ and $\nabla u$. 
\end{rmk}

To give one example covered by Theorem \ref{A'} but not Theorem \ref{A}, we note that equation \eqref{25} encompasses the following equation of Monge-Amp\`ere type on the round sphere $(S^n,g_0)$:
	\begin{align*}
	\operatorname{det}^{1/n}\bigg(g_0^{-1}\bigg(\nabla_{g_0}^2 u - \frac{|\nabla_{g_0} u|_{g_0}^2}{2u}g_0 + \frac{u}{2}g_0\bigg)\bigg) = \frac{|\nabla_{g_0}u|_{g_0}^2 + u^2}{2u}\phi,
	\end{align*}
	where $\phi=\phi(x)>0$ is given and one looks for a positive solution $u$. This equation has been studied previously in the context of geometric optics -- see for instance equation (1.11) in \cite{Wang96}, therein taking the distribution density $f$ to be constant.

\subsection{Notation and outline of the proof}\label{38} 
~\medskip

We now give a brief outline of the proof of Theorem \ref{A'}, which will establish notation and highlight the main steps. We start by fixing a solution $u\in W^{2,p}_{\operatorname{loc}}(\Omega)\cap C^{0,1}_{\operatorname{loc}}(\Omega)$ to \eqref{25}. For a fixed unit vector $\xi\in\mathbb{R}^n$ and small $h\in\mathbb{R}\backslash\{0\}$, we define the first order difference quotient $\nabla_\xi^h u(x) \defeq h^{-1} (u(x+h\xi) - u(x))$ and the second order difference quotient 
\begin{equation*}
\Delta_{\xi\xi}^h u \defeq \nabla_\xi^h (\nabla_\xi^{-h}u(x)) = \frac{u(x+h\xi) - 2u(x) + u(x-h\xi)}{h^2}.
\end{equation*}
To prove Theorem \ref{A'}, it suffices to obtain (for sufficiently small $h$) an upper bound for $\Delta^h_{
\xi\xi}u$ on $B_R$ which is independent of $h$. Indeed, this implies an upper bound for $\Delta u$ on $B_R$, the assumption $\lambda(A_{H,J}[u])\in\Gamma\subseteq\Gamma_1^+$ implies a lower bound for $\Delta u$, and the full Hessian bound then follows from writing $\nabla_i\nabla_j u = \frac{1}{2}(2\nabla_{\xi}\nabla_{\xi}u - \nabla_i\nabla_i u -\nabla_j\nabla_j u)$ for $\xi = \frac{1}{\sqrt{2}}(e_i+e_j)$, where $\nabla_{\xi}$ is the directional derivative in the direction $\xi$ and $\{e_i\}_{1\leq i\leq n}$ is the standard basis on $\mathbb{R}^n$. Our upper bound for $\Delta^h_{\xi\xi} u$ on $B_R$ will depend on the $L^p$ norm of $\Delta u$ on $B_{3R}$. 

To this end, define on $B_{2R}$ (which we assume to be centred at the origin) the function $v= \eta \Delta^h_{\xi\xi} u$, where 
\begin{equation}\label{e}
\eta(x) = \bigg(1-\frac{|x|^2}{4R^2}\bigg)^\beta
\end{equation}
and $\beta>2$ is a constant to be determined later. Recall that the linearised operator 
\begin{equation*}
F^{ij} \defeq \frac{\partial }{\partial A_{ij}}f(\lambda(A_{H,J}[u]))
\end{equation*}
is positive definite a.e.~in $\Omega$ by the ellipticity assumption in \eqref{24}. The first main step of our proof is to obtain an upper bound for $-F^{ij}\nabla_i\nabla_jv$ (which is formally of fourth order in the derivatives of $u$) in terms of $\operatorname{tr}(F)|\nabla(\Delta^h u)|$ (which is formally of third order), $\operatorname{tr}(F)|\Delta^h u|$ (which is formally of second order) and lower order terms. More precisely, we will prove in Section \ref{29} the following lemma:

\begin{lem}\label{t26}
	Let $\Omega, f,\Gamma, H,J,\psi_1$ and $\psi_2$ be as in the statement of Theorem \ref{A'} (with $(f,\Gamma)$ not necessarily satisfying \eqref{26}). Then for any ball $B_{2R}\Subset\Omega$ and any solution $u\in W^{2,p}_{\operatorname{loc}}(\Omega)\cap C^{0,1}_{\operatorname{loc}}(\Omega)$ to \eqref{25} with $p>n/2$, it holds that
	\begin{align}\label{e4'}
	-F^{ij}\nabla_i\nabla_j v  & \leq C\operatorname{tr}(F)\Big(\eta\big(|\nabla(\Delta^h_{\xi\xi} u)|  + |\Delta^h_{\xi\xi} u| + 1\big) + |\nabla\eta||\nabla(\Delta^h_{\xi\xi} u)| + |\nabla^2 \eta||\Delta^h_{\xi\xi} u|\Big) \nonumber \\
	& \quad + C\eta\big(|\nabla(\Delta^h_{\xi\xi} u)| + |\Delta^h_{\xi\xi} u|+1\big)
	\end{align}
	a.e.~in $B_{2R}$, where $C$ is a constant depending only on $n,R,H,J,\psi_1,\psi_2$ and an upper bound for $\|u\|_{C^{0,1}(B_{2R})}$. 
\end{lem}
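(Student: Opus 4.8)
The plan is to differentiate the equation \eqref{25} twice in the direction $\xi$ in the discrete sense, then apply the concavity of $f$ and carefully track the lower-order (but formally high-order) contributions coming from $H[u]|\nabla u|^2$ and $\psi_2[u]|\nabla u|^2$. Since $u\in W^{2,p}_{\operatorname{loc}}$ with $p>n/2$ is only twice differentiable a.e., the argument must be phrased throughout in terms of difference quotients rather than honest third derivatives; this is why the statement is formulated with $\Delta^h_{\xi\xi}u$ and why the bound \eqref{e4'} only involves $\nabla(\Delta^h_{\xi\xi}u)$ (a genuine weak gradient of an $L^p$ function) rather than $\nabla^3 u$.

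First I would record the ``discrete Bochner-type formula'' from \cite{DN21}: applying the operator $\Delta^h_{\xi\xi}$ to the matrix $A_{H,J}[u]$ yields $\Delta^h_{\xi\xi}(A_{H,J}[u]) = \nabla^2(\Delta^h_{\xi\xi}u) + (\text{error}_H) + (\text{error}_J)$, where $\text{error}_H$ collects the terms produced by differencing $H[u]|\nabla u|^2$ twice — these involve products like $(\nabla_\xi^{\pm h}u)(\nabla_\xi^{\pm h}\nabla u)$, i.e. first difference quotients of $\nabla u$ times first difference quotients of $u$, plus terms with $\Delta^h_{\xi\xi}u$ times $|\nabla u|^2$ and terms with at most second derivatives — and similarly for $\text{error}_J$, which only involves values and first derivatives of $u$ since $J$ has no gradient factor. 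Crucially, because $\xi$ is fixed and $H, J, \psi_1, \psi_2$ are $C^{1,1}_{\operatorname{loc}}$ and $u\in C^{0,1}$, every appearance of ``$\nabla^3 u$'' is replaced by $|\nabla(\Delta^h_{\xi\xi}u)|$ up to constants controlled by $\|u\|_{C^{0,1}(B_{2R})}$. Differencing the right-hand side $\psi_1[u]+\psi_2[u]|\nabla u|^2$ twice produces terms bounded by $C(\eta^{-1}v \text{ or } 1)$ plus, from the $\psi_2$ part, a term of the shape $\psi_2[u](\nabla_\xi^h\nabla u)\cdot(\nabla_\xi^{-h}\nabla u)$ which again is $O(|\nabla(\Delta^h_{\xi\xi}u)|)$.

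Next, I would use concavity of $f$ as a function of the matrix entries: for matrices $B_0, B_1$ with eigenvalues in $\Gamma$, $f(\lambda(B_1)) - f(\lambda(B_0)) \le F^{ij}(B_0)(B_1 - B_0)_{ij}$, and the standard second-order discrete version gives $F^{ij}\,\Delta^h_{\xi\xi}(A_{H,J}[u])_{ij} \ge \Delta^h_{\xi\xi}\big(f(\lambda(A_{H,J}[u]))\big)$ a.e., where $F^{ij}$ is evaluated at the base point. Substituting the Bochner identity, we get $F^{ij}\nabla_i\nabla_j(\Delta^h_{\xi\xi}u) \ge \Delta^h_{\xi\xi}(\text{RHS of \eqref{25}}) - F^{ij}(\text{error}_H)_{ij} - F^{ij}(\text{error}_J)_{ij}$. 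The error terms are then estimated by $\operatorname{tr}(F)$ times the claimed combination: $|F^{ij} B_{ij}| \le \operatorname{tr}(F)\|B\|$ when $F>0$ and $B$ symmetric, so each error matrix of size $O(|\nabla(\Delta^h_{\xi\xi}u)| + |\Delta^h_{\xi\xi}u| + 1)$ contributes $\operatorname{tr}(F)$ times that. Finally I would multiply through by $\eta$ and move the cutoff inside: $-F^{ij}\nabla_i\nabla_j v = -\eta\, F^{ij}\nabla_i\nabla_j(\Delta^h_{\xi\xi}u) - 2F^{ij}\nabla_i\eta\,\nabla_j(\Delta^h_{\xi\xi}u) - F^{ij}\nabla_i\nabla_j\eta\,(\Delta^h_{\xi\xi}u)$, and the last two terms are bounded by $\operatorname{tr}(F)(|\nabla\eta||\nabla(\Delta^h_{\xi\xi}u)| + |\nabla^2\eta||\Delta^h_{\xi\xi}u|)$, exactly matching \eqref{e4'}; the $C\eta(\cdots)$ summand absorbs the pieces of $\Delta^h_{\xi\xi}(\text{RHS})$ and the $\psi$-derivative terms that do not carry a factor of $\operatorname{tr}(F)$.

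The main obstacle is the bookkeeping of the ``$H$-error'' terms. Differencing $H[u]|\nabla u|^2$ twice produces a term $H[u]\cdot\Delta^h_{\xi\xi}(|\nabla u|^2)$, and $\Delta^h_{\xi\xi}(|\nabla u|^2)$ expands (by the discrete product rule) into $(\nabla_\xi^h\nabla u)\cdot(\nabla_\xi^h\nabla u) + (\nabla_\xi^{-h}\nabla u)\cdot(\nabla_\xi^{-h}\nabla u) + 2\nabla u\cdot\nabla(\Delta^h_{\xi\xi}u)$-type expressions evaluated at shifted points — the genuinely delicate point being that the quadratic-in-$\nabla_\xi^{\pm h}\nabla u$ terms are \emph{not} obviously $O(|\nabla(\Delta^h_{\xi\xi}u)|)$, since $\nabla_\xi^{\pm h}\nabla u$ is a first difference quotient of $\nabla u$, not of $\nabla^2 u$. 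Here one uses that $\nabla_\xi^{\pm h}\nabla u = \frac{1}{h}(\nabla u(x\pm h\xi)-\nabla u(x))$ is bounded in $L^p$ uniformly in $h$ by $\|\nabla^2 u\|_{L^p(B_{3R})}$ (difference-quotient characterization of Sobolev functions), so although pointwise these terms are only in $L^{p/2}$, that is enough for the later ABP/integrability step — but for the statement of Lemma \ref{t26} itself one must be careful to state the bound only with the linear quantity $|\nabla(\Delta^h_{\xi\xi}u)|$, which forces using $\lambda(A_{H,J}[u])\in\Gamma_1^+$ to trade a factor $\nabla_\xi^h\nabla u$ against $\Delta u$ and hence against $\Delta^h_{\xi\xi}u$ plus controlled terms, or else absorbing the square via the sign of $H\ge 0$ together with the definition $v=\eta\Delta^h_{\xi\xi}u$. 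Getting this trade-off to land precisely on the right-hand side of \eqref{e4'}, with constants depending only on $\|u\|_{C^{0,1}(B_{2R})}$ and the data, is the crux of the proof.
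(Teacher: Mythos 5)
Your overall scaffolding matches the paper's: difference the equation twice, invoke concavity of $f$ in the form $\Delta^h_{\xi\xi} f(\lambda(A_{H,J}[u])) \le F^{ij}\,\Delta^h_{\xi\xi}(A_{H,J}[u])_{ij}$ a.e., use the product rule to peel off the cutoff, and then reduce the lemma to lower bounds of the shape $\Delta^h_{\xi\xi}(H[u]|\nabla u|^2)\ge -C|\nabla(\Delta^h_{\xi\xi}u)|-C|\Delta^h_{\xi\xi}u|-C$ (and similarly for $\psi_1,\psi_2|\nabla u|^2$ and $J$). You also correctly locate the crux: the discrete Bochner expansion of $\Delta^h_{\xi\xi}(H[u]|\nabla u|^2)$ produces genuinely quadratic terms $(H[u])^{\pm h}_\xi\,|\nabla\nabla_\xi^{\pm h}u|^2$, and these are not pointwise $O(|\nabla(\Delta^h_{\xi\xi}u)|+|\Delta^h_{\xi\xi}u|)$. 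But at exactly this point your proposal stalls and offers two hedged alternatives, neither carried out, and one of which does not work.

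The first alternative — invoking $\lambda(A_{H,J}[u])\in\Gamma_1^+$ to trade $\nabla_\xi^h\nabla u$ against $\Delta u$ and then against $\Delta^h_{\xi\xi}u$ — fails pointwise: the components of $\nabla_\xi^h\nabla u$ are mixed second difference quotients which have no a.e.\ relation to $\Delta^h_{\xi\xi}u$ (take $u=x_1x_2$, $\xi=e_1$, where $\nabla_\xi^h\partial_2 u\equiv1$ while $\Delta^h_{\xi\xi}u\equiv0$), and $\Gamma_1^+$ only gives a one-sided bound on $\Delta u$ at shifted points. Your remark that the $L^p$-boundedness of $\nabla_\xi^{\pm h}\nabla u$ ``is enough for the later ABP step'' is also not a valid escape route here: the lemma asserts the specific pointwise a.e.\ inequality \eqref{e4'}, and the subsequent ABP argument is applied to that pointwise bound, so you cannot weaken it to an integral bound. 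The second alternative you mention — ``absorbing the square via the sign of $H\ge 0$'' — is indeed the correct idea, and it is what the paper does, but you state it too vaguely and tie it to ``the definition $v=\eta\Delta^h_{\xi\xi}u$'', which plays no role. The mechanism is: since you need a \emph{lower} bound on $\Delta^h_{\xi\xi}(H[u]|\nabla u|^2)$, and $H>0$ (or $\equiv0$) forces $(H[u])^{\pm h}_\xi\ge C_0>0$ on $B_{2R}$, the quadratic terms $(H[u])^{\pm h}_\xi|\nabla\nabla_\xi^{\pm h}u|^2$ appear with a \emph{favourable} sign; you use them via Young's inequality to absorb the cross terms that are linear in $|\nabla\nabla_\xi^{\pm h}u|$, and then simply discard what remains. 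No estimate on $|\nabla_\xi^{\pm h}\nabla u|$ in terms of $\Delta^h_{\xi\xi}u$ or $\nabla(\Delta^h_{\xi\xi}u)$ is ever needed. The same sign structure handles the $\psi_2[u]|\nabla u|^2$ term (using $\psi_2\ge 0$), and the remaining piece $|\nabla u|^2\Delta^h_{\xi\xi}H[u]\ge -C|\Delta^h_{\xi\xi}u|-C$ comes from the $z$-semiconvexity of $H$ guaranteed by $H\in C^{1,1}_{\operatorname{loc}}$, a step you also do not address explicitly.
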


Lemma \ref{t26} is the main new ingredient in the proof of Theorem \ref{A'}; once the estimate \eqref{e4'} is established, the proof of Theorem \ref{A'} then proceeds similarly to that of \cite{BCGJ03}, but with some extra terms. We summarise this argument now (the details will be given in Section \ref{27}). The main point is that on the upper contact set of $v$ in $B_{2R}$, defined by
\begin{equation}\label{e19}
\Gamma_v^+(B_{2R}) = \{x\in B_{2R}: v(z)\leq v(x) + \nu\cdot(z-x) ~\text{for all }z\in B_{2R},~\text{for some }\nu\in\mathbb{R}^n\},
\end{equation}
one can bound $|\nabla(\Delta^h_{\xi\xi} u)|$ from above in terms of $|\Delta^h_{\xi\xi} u|$. More precisely, we have:

\begin{lem}[\cite{BCGJ03}]\label{37}
	Almost everywhere on $\Gamma_v^+(B_{2R})$, it holds that
	\begin{equation}\label{35}
	\eta|\nabla(\Delta^h_{\xi\xi} u)| \leq (1+\beta)R^{-1}\eta^{-\frac{1}{\beta}}v. 
	\end{equation}
\end{lem}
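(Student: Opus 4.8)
The plan is to use nothing about $v$ beyond the defining property of the upper contact set in \eqref{e19} — at each of its points there is an affine function lying above $v$ on all of $B_{2R}$ and touching $v$ there — together with the fact that $v$ vanishes on $\partial B_{2R}$ and a short chord-length computation in the ball; this is exactly the lemma of \cite{BCGJ03}, adapted to the cutoff $\eta$ from \eqref{e}. First some preliminaries: writing $w=\Delta^h_{\xi\xi}u$, so that $v=\eta w$, note that since $u\in C^{0,1}_{\operatorname{loc}}(\Omega)$ and $B_{2R}\Subset\Omega$, for all sufficiently small $h$ the function $w$ is bounded on $\overline{B_{2R}}$; hence $v\in C(\overline{B_{2R}})$ with $v\equiv 0$ on $\partial B_{2R}$, because $\eta$ vanishes there. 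Moreover $w$ is a fixed linear combination of translates of $u$, so $w\in W^{2,p}_{\operatorname{loc}}$ and $v$ is differentiable at a.e.\ point of $B_{2R}$; at any such point $x\in\Gamma_v^+(B_{2R})$ the slope $\nu$ in \eqref{e19} is necessarily $\nabla v(x)$, so it suffices to prove \eqref{35} at every such $x$.

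The core step is a pointwise gradient bound for $v$ on the contact set: I claim that at every such $x$ one has $v(x)\ge 0$ and $|\nabla v(x)|\le R^{-1}\eta(x)^{-1/\beta}v(x)$. By continuity the supporting inequality $v(z)\le v(x)+\nabla v(x)\cdot(z-x)$ holds for all $z\in\overline{B_{2R}}$; evaluating it on $\partial B_{2R}$, where $v=0$, gives $\nabla v(x)\cdot(z-x)\ge -v(x)$ for every $|z|=2R$, and taking $z=-2R\,\nabla v(x)/|\nabla v(x)|$ (or arbitrary if $\nabla v(x)=0$) forces $v(x)\ge 0$. The gradient bound is trivial when $\nabla v(x)=0$; otherwise set $e=-\nabla v(x)/|\nabla v(x)|$ and let $d>0$ be the length of the segment from $x$ in direction $e$ up to $\partial B_{2R}$. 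Solving $|x+de|^2=4R^2$ and rationalising gives
\[
d=\frac{4R^2-|x|^2}{x\cdot e+\sqrt{(x\cdot e)^2+4R^2-|x|^2}}\ \geq\ \frac{4R^2-|x|^2}{4R}\ =\ R\,\eta(x)^{1/\beta},
\]
where the inequality uses $x\cdot e\le |x|\le 2R$ and $(x\cdot e)^2\le |x|^2$. Inserting $z=x+de\in\partial B_{2R}$ into the supporting inequality yields $0\le v(x)-d\,|\nabla v(x)|$, i.e.\ $|\nabla v(x)|\le v(x)/d\le R^{-1}\eta(x)^{-1/\beta}v(x)$.

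It remains to pass from $v$ back to $w$. Differentiating $v=\eta w$ gives, at $x$ (where $\eta(x)>0$), $\eta\nabla w=\nabla v-(v/\eta)\nabla\eta$; since $\nabla\eta=-\tfrac{\beta}{2R^2}\,x\,\eta^{1-1/\beta}$ we have $|\nabla\eta|\le \tfrac{\beta}{R}\eta^{1-1/\beta}$ on $B_{2R}$, so using $v(x)\ge 0$ and the previous step,
\[
\eta\,|\nabla w|\ \le\ |\nabla v|+\tfrac{v}{\eta}\,|\nabla\eta|\ \le\ R^{-1}\eta^{-1/\beta}v+\beta R^{-1}\eta^{-1/\beta}v\ =\ (1+\beta)R^{-1}\eta^{-1/\beta}v,
\]
which is \eqref{35}, with constant independent of $h$. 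The one step needing a little care is the chord-length bound $d\ge R\,\eta^{1/\beta}$ (equivalently $1/d\le R^{-1}\eta^{-1/\beta}$): this is what upgrades the crude estimate $|\nabla v(x)|\le \sup_{B_{2R}}v\,/\,\operatorname{dist}(x,\partial B_{2R})$ into a bound pointwise in $v(x)$, and the observation $v\ge 0$ on $\Gamma_v^+(B_{2R})$ is what makes the right-hand side of \eqref{35} legitimate.
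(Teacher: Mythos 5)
Your proof is correct and follows essentially the same route as the paper's: pick the boundary point $z$ in the direction $-\nabla v(x)/|\nabla v(x)|$, use the supporting-hyperplane inequality on $\Gamma_v^+(B_{2R})$ together with $v\equiv 0$ on $\partial B_{2R}$ to get $|\nabla v(x)|\leq v(x)/|z-x|$, bound the chord length $|z-x|$ below by $R\eta^{1/\beta}$, and then convert the bound on $|\nabla v|$ into one on $\eta|\nabla w|$ using $|\nabla\eta|\leq\beta R^{-1}\eta^{1-1/\beta}$. The only cosmetic difference is that the paper gets $|z-x|\geq 2R-|x|\geq R\eta^{1/\beta}$ in one line via the reverse triangle inequality, while you solve the quadratic for the chord length and rationalise — both give the same lower bound. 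You are somewhat more explicit about the preliminaries (continuity of $v$ up to the boundary, $v\geq 0$ on the contact set, identifying the supporting slope $\nu$ with $\nabla v(x)$), which the paper leaves implicit, but there is no gap in either version.
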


After substituting \eqref{35} into \eqref{e4'}, dividing through by $(\operatorname{det}F^{ij})^{1/n}$, applying \eqref{26} and carrying out some simple calculations, we will obtain the estimate 
\begin{equation*}
0  \leq \frac{-F^{ij}\nabla_i\nabla_j v}{(\det F^{ij})^{1/n}} \leq  C\bigg( \frac{v}{R\eta^{\frac{1}{\beta}}} + v + \eta  +  \frac{v}{R^2\eta^{\frac{2}{\beta}}}\bigg)(\Delta u+C)^{\gamma/n} \quad \text{a.e. on }\Gamma_v^+(B_{2R}). 
\end{equation*}
An application of the ABP estimate and some further calculations then yields an upper bound for $v$ on $B_{2R}$, and hence an upper bound for $\Delta^h_{\xi\xi} u$ on $B_R$, as required. For later reference, we recall the ABP estimate as follows:

\begin{thm}[see e.g.~{\cite[Chapter 9]{GT}}]\label{alex}
	Suppose $a^{ij}$ is measurable and positive definite a.e.~on a smooth bounded domain $\Sigma\subset\mathbb{R}^n$. Then there exists a constant $C=C(n)$ such that for any $\phi\in W^{2,n}_{\operatorname{loc}}(\Sigma)\cap C^0(\overline\Sigma)$ with $\phi\equiv 0$ on $\partial\Sigma$, one has
	\begin{equation*}
	\sup_\Sigma \phi \leq Cd\bigg(\int_{\Gamma_v^+(\Sigma)}\frac{(-a^{ij}\nabla_i\nabla_j\phi)^n}{\operatorname{det}(a^{ij})}\,dx\bigg)^{1/n},
	\end{equation*}
	where $d$ is the diameter of $\Sigma$.
\end{thm}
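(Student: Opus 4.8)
This is the classical Alexandrov--Bakelman--Pucci estimate, and I would prove it along the standard lines. The inequality is trivial unless $M\defeq\sup_\Sigma\phi>0$, so assume this from now on, and write $\Gamma_\phi^+(\Sigma)$ for the upper contact set of $\phi$, defined as in \eqref{e19} with $v$ replaced by $\phi$ (this is the set that ``$\Gamma_v^+(\Sigma)$'' refers to in the statement). The proof splits into three parts: (a) a sliding-hyperplane argument showing that the image of $\Gamma_\phi^+(\Sigma)$ under the normal (subgradient) map of $\phi$ contains the ball $B_{M/d}(0)$; (b) a change of variables bounding the measure of that image by $\int_{\Gamma_\phi^+(\Sigma)}\det(-\nabla^2\phi)\,dx$; and (c) a pointwise matrix arithmetic--geometric mean inequality converting the integrand into $(-a^{ij}\nabla_i\nabla_j\phi)^n/\det(a^{ij})$.

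For (a): since $\phi\in C^0(\overline\Sigma)$ vanishes on $\partial\Sigma$ and $M>0$, the supremum is attained at some interior point $x_1\in\Sigma$. Given $p\in\mathbb{R}^n$ with $|p|<M/d$, the function $x\mapsto\phi(x)-p\cdot(x-x_1)$ equals $M$ at $x_1$ while on $\partial\Sigma$ it is at most $|p\cdot(x-x_1)|\le|p|\,d<M$; hence it attains its maximum over the compact set $\overline\Sigma$ at an interior point $x_0\in\Sigma$, where the affine function $\phi(x_0)+p\cdot(x-x_0)$ lies above the graph of $\phi$. Thus $x_0\in\Gamma_\phi^+(\Sigma)$ and $p$ belongs to the subdifferential of $\phi$ at $x_0$ (and $p=\nabla\phi(x_0)$ wherever $\phi$ is differentiable). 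As $p$ was arbitrary, $B_{M/d}(0)$ lies in the image of $\Gamma_\phi^+(\Sigma)$ under the normal map.

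For (b): when $\phi\in C^2$ the normal map is just $\nabla\phi$ and the area formula for $C^1$ maps gives $|\nabla\phi(\Gamma_\phi^+(\Sigma))|\le\int_{\Gamma_\phi^+(\Sigma)}|\det\nabla^2\phi|\,dx=\int_{\Gamma_\phi^+(\Sigma)}\det(-\nabla^2\phi)\,dx$, using that $-\nabla^2\phi\ge0$ a.e.\ on the upper contact set (a point where $\phi$ lies below a tangent hyperplane and is twice differentiable has $\nabla^2\phi\le0$). For $\phi\in W^{2,n}_{\operatorname{loc}}(\Sigma)$ the same bound follows either by approximating $\phi$ in $W^{2,n}_{\operatorname{loc}}$ by smooth functions while tracking the contact sets, or by replacing $\phi$ with its concave envelope and invoking Alexandrov's second-differentiability theorem; either way, combined with (a),
\begin{equation*}
\omega_n\Big(\frac{M}{d}\Big)^n=|B_{M/d}(0)|\le\int_{\Gamma_\phi^+(\Sigma)}\det(-\nabla^2\phi)\,dx,\qquad\omega_n\defeq|B_1(0)|.
\end{equation*}
For (c): at a point of $\Gamma_\phi^+(\Sigma)$ where $\phi$ is twice differentiable, $-\nabla^2\phi$ and $a=(a^{ij})$ are positive semidefinite, so applying the arithmetic--geometric mean inequality to the nonnegative eigenvalues of $(-\nabla^2\phi)^{1/2}\,a\,(-\nabla^2\phi)^{1/2}$ gives
\begin{equation*}
\det(-\nabla^2\phi)\,\det(a^{ij})\le\Big(\frac{1}{n}\operatorname{tr}\!\big(a(-\nabla^2\phi)\big)\Big)^n=\Big(\frac{1}{n}\big(-a^{ij}\nabla_i\nabla_j\phi\big)\Big)^n
\end{equation*}
a.e.\ on $\Gamma_\phi^+(\Sigma)$. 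Substituting this into the previous display,
\begin{equation*}
\omega_n\frac{M^n}{d^n}\le\frac{1}{n^n}\int_{\Gamma_\phi^+(\Sigma)}\frac{(-a^{ij}\nabla_i\nabla_j\phi)^n}{\det(a^{ij})}\,dx,
\end{equation*}
and taking $n$-th roots yields the assertion with $C(n)=(n\,\omega_n^{1/n})^{-1}$.

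The part requiring genuine care --- and the only non-elementary step --- is (b): the covering property of the normal map together with the change of variables $|\nabla\phi(E)|\le\int_E|\det\nabla^2\phi|$ is transparent for $\phi\in C^{1,1}$, but for merely $W^{2,n}_{\operatorname{loc}}$ solutions one must justify it by approximation or via the concave-envelope/Alexandrov route indicated above; the sliding-hyperplane argument in (a) and the matrix inequality in (c) are routine. A fully detailed proof along exactly these lines is given in \cite[Chapter 9]{GT}.
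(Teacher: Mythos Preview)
The paper does not give a proof of this theorem: it is stated as a known result with a citation to \cite[Chapter 9]{GT} and used as a black box in the proof of Theorem \ref{A'}. Your outline is correct and is precisely the standard argument one finds in that reference, so there is nothing to compare.
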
 

Throughout the rest of the paper, we use summation convention over indices appearing as both a subscript and superscript. We may also suppress the phrase `a.e.'~for pointwise calculations involving second derivatives. 

\subsection{Proof of Lemma \ref{t26}}\label{29}

~\medskip 

In this section we prove Lemma \ref{t26} -- as remarked above, this is the main new ingredient in the proof of Theorem \ref{A}. 

\begin{proof}[Proof of Lemma \ref{t26}]
	By concavity of $f$, we have 
	\begin{align}\label{17}
	f\big(\lambda(A_{H,J}[u](x\pm h\xi))\big) - f\big(&\lambda(A_{H,J}[u](x))\big) \nonumber \\
	&  \leq F^{ij}(x)\big(A_{H,J}[u](x\pm h\xi) - A_{H,J}[u](x)\big)_{ij}
	\end{align}
	a.e.~in $B_{2R}$. Summing the two inequalities in \eqref{17} and dividing through by $h^2$, we therefore obtain
	\begin{equation}\label{18}
	\Delta^h_{\xi\xi} f(\lambda(A_{H,J}[u]))(x) \leq F^{ij}(x)\Delta^h_{\xi\xi}\big(\nabla^2 u - H[u]|\nabla u|^2I + J[u]\big)_{ij}(x). 
	\end{equation} 
	Substituting the equation \eqref{25} into the LHS of \eqref{18}, and commuting difference quotients with derivatives on the RHS of \eqref{18}, we see that
	\begin{equation}\label{e1}
	\Delta^h_{\xi\xi} \psi[u] \leq  F^{ij}\nabla_i\nabla_j\Delta^h_{\xi\xi} u -  \operatorname{tr}(F) \Delta^h_{\xi\xi} \big(H[u]|\nabla u|^2\big) + F^{ij}\Delta^h_{\xi\xi}(J[u])_{ij},
	\end{equation}
	where $\psi[u](x) \defeq \psi_1[u](x) + \psi_2[u](x)|\nabla u(x)|^2$. It then follows that
	\begin{align}\label{e2}
	F^{ij}\nabla_i\nabla_j v  & = F^{ij}\bigg(\eta \nabla_i\nabla_j\Delta^h_{\xi\xi} u + 2\nabla_i\eta \nabla_j\Delta^h_{\xi\xi} u + (\Delta^h_{\xi\xi} u)\nabla_i\nabla_j\eta\bigg) \nonumber \\
	& \stackrel{\eqref{e1}}{\geq} \eta \operatorname{tr}(F)\Delta^h_{\xi\xi}\big(H[u]|\nabla u|^2\big) + \eta\Delta^h_{\xi\xi} \psi[u] - \eta F^{ij}\Delta^h_{\xi\xi}(J[u])_{ij} \nonumber \\
	& \qquad + 2F^{ij}\nabla_i\eta \nabla_j\Delta^h_{\xi\xi} u + \Delta^h_{\xi\xi} uF^{ij}\nabla_i\nabla_j\eta. 
	\end{align}
	To obtain the desired estimate \eqref{e4'} from \eqref{e2}, it suffices to prove the following estimates: 
	\begin{flalign}\label{e17}
	\mathrm{\textit{Estimate~1:}} &&   \Delta^h_{\xi\xi}(H[u]|\nabla u|^2) &  \geq - C|\nabla(\Delta^h_{\xi\xi} u)| - C|\Delta^h_{\xi\xi} u| - C, & 
	\end{flalign}
	\begin{flalign}\label{36}
	\mathrm{\textit{Estimate~2:}} && \Delta^h_{\xi\xi} \psi[u] & \geq -C|\nabla(\Delta^h_{\xi\xi} u)| -C|\Delta^h_{\xi\xi} u| - C, &
	\end{flalign}
	\begin{flalign}\label{39}
	\mathrm{\textit{Estimate~3:}} && \Delta^h_{\xi\xi}(J[u])_{ij} & \geq  -C|\Delta^h_{\xi\xi} u| - C. & \medskip 
	\end{flalign}

	\noindent\textit{Proof of Estimate 1:} If $H$ is identically zero then the estimate is trivial, so suppose $H>0$. For a function $w$, define $w^h_\xi$ by $w^h_\xi(x)= w(x+h\xi)$. We use the following discrete Bochner-type formula, which we previously derived in \cite[Lemma 4.16]{DN21}: 
	\begin{align}\label{e10}
	\Delta_{\xi\xi}^h\big(H[u]|\nabla u|^2\big) & = 2H[u]\nabla^i u\nabla_i\Delta_{\xi\xi}^h u + (H[u])^{-h}_\xi\big|\nabla\nabla_\xi^{-h} u\big|^2 + (H[u])^h_\xi\big|\nabla\nabla_\xi^h u\big|^2 \nonumber \\
	& \quad + \nabla_\xi^{-h}\nabla_i u \nabla^i u \nabla_\xi^{-h}H[u] + \nabla_\xi^h\nabla^i u \nabla_i u \nabla_\xi^h H[u] \nonumber \\
	& \quad + \nabla_\xi^h\Big(\nabla_i u(\nabla^i u)_\xi^{-h}\nabla_\xi^{-h}H[u]\Big). 
	\end{align} 
	By applying the product rule for difference quotients twice, we further compute the bottom line of \eqref{e10} as follows:
	\begin{align}\label{40}
	\nabla_\xi^h\Big(\nabla_i u(\nabla^i u)_\xi^{-h}\nabla_\xi^{-h}H[u]\Big) & = \nabla^i u \nabla_\xi^h H[u] \nabla_\xi^h\nabla_i u + \nabla_i u\nabla_\xi^h\Big((\nabla^i u)^{-h}_\xi\nabla_\xi^{-h}H[u]\Big) \nonumber \\
	& = \nabla^i u \nabla_\xi^h H[u] \nabla_\xi^h\nabla_i u + |\nabla u|^2\Delta_{\xi\xi}^h H[u] \nonumber \\
	& \qquad + \nabla_i u\nabla_\xi^{-h}H[u]\nabla_\xi^{-h}\nabla^i u.
	\end{align}
	It follows after substituting \eqref{40} into \eqref{e10} that
	\begin{align}\label{e13}
	\Delta_{\xi\xi}^h(H[u]|\nabla u|^2) & \geq 2H[u]\nabla^i u\nabla_i\Delta_{\xi\xi}^h u + (H[u])^{-h}_\xi\big|\nabla\nabla_\xi^{-h} u\big|^2 + (H[u])^h_\xi\big|\nabla\nabla_\xi^h u\big|^2 \nonumber \\
	& \quad - 2|\nabla\nabla_\xi^{-h} u||\nabla u||\nabla_\xi^{-h}H[u]| -  2|\nabla \nabla_\xi^h u||\nabla u||\nabla_\xi^h H[u]|  \nonumber \\
	& \quad + |\nabla u|^2\Delta_{\xi\xi}^h H[u]. 
	\end{align}
	Now, since $H=H(x,z)>0$ and $u$ is continuous, $H[u]$ is bounded uniformly away from zero on any compact subset of $\Omega$, and in particular $(H[u])^{\pm h}_\xi \geq C_0>0$ in $B_{2R}$ for some constant $C_0$ independent of $h$. Moreover, since we assume $\nabla u\in L^\infty_{\operatorname{loc}}(\Omega)$, there exists a constant $C_1$ independent of $h$ such that 
	\begin{align}\label{41}
	- 2|\nabla\nabla_\xi^{-h} u||\nabla u||\nabla_\xi^{-h}H[u]| -&  2|\nabla \nabla_\xi^h u||\nabla u||\nabla_\xi^h H[u]| \nonumber \\
	& \qquad \geq -\frac{C_0}{2}|\nabla \nabla_\xi^{-h} u|^2 - \frac{C_0}{2}|\nabla\nabla_\xi^{h} u|^2 - C_1
	\end{align} 
	a.e.~in $B_{2R}$. Substituting \eqref{41} into \eqref{e13} and dropping the positive terms involving $|\nabla \nabla_\xi^{\pm h}u|^2$, we obtain the estimate
	\begin{align}\label{e16}
	\Delta^h_{\xi\xi}(H[u]|\nabla u|^2) & \geq 2H[u]\nabla^i u\nabla_i\Delta^h_{\xi\xi} u + |\nabla u|^2\Delta^h_{\xi\xi} H[u] - C \nonumber \\
	& \geq -C|\nabla(\Delta^h_{\xi\xi} u)| + |\nabla u|^2\Delta^h_{\xi\xi} H[u] - C .
	\end{align}
	
	To obtain \eqref{e17} from \eqref{e16}, it remains to show that $\Delta^h_{\xi\xi} H[u] \geq -C|\Delta^h_{\xi\xi} u| - C$. The argument is similar to that given in our proof of \cite[Lemma 4.10]{DN21}. First, by the $C^{1,1}$ regularity of $H$ on $B_{2R}$ (which implies that $H=H(x,z)$ is semi-convex in the $z$-variable), we can assert the existence of a constant $C_2$ such that
	\begin{equation*}
	H(x,u_1) \geq  H(x,u_2) + \frac{\partial H}{\partial z}(x,u_2)(u_1-u_2) - C_2|u_1-u_2|^2
	\end{equation*}
	for all $(x,u_i)\in \overline{B}_{2R}\times[-M,M]$, where $M$ is an upper bound for $\|u\|_{C^{0,1}(B_{2R})}$. Denoting $x^\pm = x\pm h\xi$, we therefore have 
	\begin{align}\label{32}
	H(x^+, u(x^+)) - H(x^+, u(x)) & \geq \frac{\partial H}{\partial z}(x^+,u(x))\big(u(x^+)-u(x)\big) - C_2|u(x^+)-u(x)|^2 \nonumber \\
	& \geq \frac{\partial H}{\partial z}(x,u(x))\big(u(x^+)-u(x)\big) - C_2|u(x^+)-u(x)|^2 \nonumber \\
	& \quad  - Ch|u(x^+)-u(x)| \nonumber \\
	&  \geq \frac{\partial H}{\partial z}(x,u(x))\big(u(x^+)-u(x)\big) - Ch^2,
	\end{align}
	where we have used
	\begin{equation*}
	\bigg|\frac{\partial H}{\partial z}(x,u(x)) - \frac{\partial H}{\partial z}(x^+,u(x))\bigg| \leq \|H\|_{C^{1,1}}|x^+-x| = h\|H\|_{C^{1,1}} = Ch
	\end{equation*}
	to obtain the second inequality in \eqref{32}, and the fact that $u\in C^{0,1}_{\operatorname{loc}}(\Omega)$ to obtain the last inequality in \eqref{32}. Similarly, 
	\begin{align}\label{33}
	H(x^-,u(x^-)) - H(x^-,u(x)) \geq \frac{\partial H}{\partial z}(x,u(x))\big(u(x^-)-u(x)\big) - Ch^2, 
	\end{align}
	and combining \eqref{32} and \eqref{33} we therefore obtain
	\begin{align}\label{e14}
	&\Delta_{\xi\xi}^h H[u](x) \geq \frac{\partial H}{\partial z}(x,u(x))\Delta_{\xi\xi}^h u(x) + \frac{H(x^+,u(x)) - 2H(x,u(x)) + H(x^-,u(x))}{h^2} - C. 
	\end{align}
	Now, by Lipschitz regularity of $H$, for all $z\in[-M,M]$ we also have 
	\begin{align}\label{42}
	H(x^+,z) - 2H(x,&z) + H(x^-,z)  \nonumber \\
	& = h\int_0^1\bigg( \nabla_\xi H(x+th\xi,z) -\nabla_\xi H(x-th\xi,z)\bigg)\,dt  \leq Ch^2,
	\end{align}
	where $\nabla_\xi H$ is the directional derivative of $H$ in the direction $\xi$. Taking $z=u(x)$ in \eqref{42} and substituting this back into \eqref{e14}, we obtain the estimate $\Delta^h_{\xi\xi} H[u] \geq -C|\Delta^h_{\xi\xi} u| - C$, and thus \eqref{e17} is established. \newline
	
	\noindent\textit{Proof of Estimate 2:} The proof that $\Delta^h_{\xi\xi} (\psi_2[u]|\nabla u|^2) \geq - C|\nabla(\Delta^h_{\xi\xi} u)| - C|\Delta^h_{\xi\xi} u| - C$ is exactly the same as the proof of Estimate 1. The proof that $\Delta^h_{\xi\xi} \psi_1[u] \geq - C|\Delta^h_{\xi\xi} u| - C$ is exactly the same as the estimate given for $\Delta^h_{\xi\xi} H[u]$ above. \newline
	
		\noindent\textit{Proof of Estimate 3:} The proof that $\Delta^h_{\xi\xi}(J[u])_{ij}\geq -C|\Delta^h_{\xi\xi} u| - C$ is exactly the same as the estimate given for $\Delta^h_{\xi\xi} H[u]$ above. \medskip 
		
		With these estimates established, the proof of Lemma \ref{t26} is therefore complete. 
	\end{proof}

\subsection{Proof of Theorem \ref{A'}}\label{27}
~\medskip 

In this section we complete the proof of Theorem \ref{A'} (and hence Theorem \ref{A}). In our estimates, we will repeatedly use the following bounds on the derivatives of $\eta$, defined in \eqref{e}:
\begin{equation}\label{e3}
|\nabla\eta| \leq \frac{C}{R}\eta^{1-\frac{1}{\beta}}\quad\text{and}\quad |\nabla^2\eta|\leq\frac{C}{R^2}\eta^{1-\frac{2}{\beta}},
\end{equation}
where $C=C(n,\beta)$. 

As outlined in Section \ref{38}, the two key lemmas in the proof of Theorem \ref{A'} are Lemma \ref{t26} (proved in the previous section) and Lemma \ref{37}, which was established in \cite{BCGJ03}. For the convenience of the reader, we give the proof of Lemma \ref{37} here.

\begin{proof}[Proof of Lemma \ref{37}]
	First observe that since $u\in C^{0,1}(\Omega)$, $\nabla v(x)$ exists for a.e.~$x\in B_{2R}$. For such $x\in\Gamma_v^+(B_{2R})$ satisfying $\nabla v(x) \not=0$, let $z\in \partial B_{2R}$ be such that
	\begin{equation*}
	\frac{z-x}{|z-x|}  = - \frac{\nabla v(x)}{|\nabla v(x)|}. 
	\end{equation*}
	Since $v=0$ on $\partial B_{2R}$ and $|z-x| \geq |z| -|x| = 2R - |x| \geq R\eta^\frac{1}{\beta}$ (the last inequality following from the definition of $\eta$), we thus have for such $x\in \Gamma_v^+(B_{2R})$ that 
	\begin{equation}\label{t49}
	v(x) \geq v(z) - \nabla v(x)\cdot(z-x)  = -\nabla v(x)\cdot(z-x) = |z-x||\nabla v(x)| \geq R\eta^{\frac{1}{\beta}}|\nabla v(x)|.
	\end{equation}
	It follows that at such points $x\in \Gamma_v^+(B_{2R})$, we have the estimate
	\begin{align}\label{e5}
	\eta|\nabla(\Delta^h_{\xi\xi} u)|  = |\nabla v - (\Delta^h_{\xi\xi} u)\nabla \eta| & \leq |\nabla v| + \Delta^h_{\xi\xi} u|\nabla \eta| \nonumber \\
	& \leftstackrel{\eqref{e3},\eqref{t49}}{\leq} \frac{v}{R\eta^\frac{1}{\beta}} + \frac{v}{\eta} \frac{\beta}{R}\eta^{1-\frac{1}{\beta}}  = \frac{(1+\beta)v}{R\eta^{\frac{1}{\beta}}}.
	\end{align}
Finally, for points $x\in\Gamma_v^+(B_{2R})$ such that $\nabla v(x)$ exists but is zero, it is clear that \eqref{e5} still holds. 
\end{proof}

\begin{proof}[Proof of Theorem \ref{A'}]
	Our starting point is the estimate \eqref{e4'} obtained in Lemma \ref{t26}, which implies (by \eqref{e3})
	\begin{align}\label{e4}
	-F^{ij}\nabla_i\nabla_jv & \leq C\operatorname{tr}(F)\bigg(\eta\big(|\nabla(\Delta^h_{\xi\xi} u)| +|\Delta^h_{\xi\xi} u| + 1\big) + \frac{\eta^{1-\frac{1}{\beta}}}{R}|\nabla(\Delta^h_{\xi\xi} u)| + \frac{\eta^{1-\frac{2}{\beta}}}{R^2}|\Delta^h_{\xi\xi} u|\bigg) \nonumber \\
	& \qquad + C\eta\big(|\nabla(\Delta^h_{\xi\xi} u)| +|\Delta^h_{\xi\xi} u| + 1\big). 
	\end{align}

	Next, we substitute the estimate \eqref{35} of Lemma \ref{37} into the RHS of \eqref{e4}. Using the fact that $\nabla_i\nabla_j v$ is negative semi-definite a.e.~in $\Gamma_v^+(B_{2R})$, and that $F^{ij}$ is positive definite, we obtain a.e. in $\Gamma_v^+(B_{2R})$ the estimate
	\begin{align}\label{e6}
	0 & \leq -F^{ij}\nabla_i\nabla_j v \nonumber \\
	& \stackrel{\eqref{35}}{\leq} C\operatorname{tr}(F)\bigg(\frac{(1+\beta)v}{R\eta^{\frac{1}{\beta}}} + v + \eta  + \frac{(1+\beta)v}{R^2\eta^\frac{2}{\beta}} + \frac{v}{R^2\eta^\frac{2}{\beta}} \bigg) + C\bigg(\frac{(1+\beta)v}{R\eta^{\frac{1}{\beta}}} + v + \eta \bigg) \nonumber \\
	& \,\,\leq  C\operatorname{tr}(F)\bigg(\frac{v}{R\eta^{\frac{1}{\beta}}} +  v +  \eta +   \frac{v}{R^2\eta^{\frac{2}{\beta}}}\bigg) + C\bigg(\frac{v}{R\eta^{\frac{1}{\beta}}} +  v +  \eta\bigg).
	\end{align}
	It then follows from \eqref{e6}, \eqref{26} and the fact that 
	\begin{equation*}
	\operatorname{tr}(A_{H,J}[u]) = \operatorname{tr}\big(\nabla^2 u - H[u]|\nabla u|^2I + J[u]\big) \leq \Delta u + C
	\end{equation*}
	that a.e.~in $\Gamma_v^+(B_{2R})$, we have
	\begin{align}\label{e9}
	0   \leq \frac{-F^{ij}\nabla_i\nabla_j v}{(\det F^{ij})^{1/n}} & \leq \frac{C\operatorname{tr}(F)}{(\operatorname{det}F^{ij})^{1/n}}\bigg(\frac{v}{R\eta^{\frac{1}{\beta}}} +  v +  \eta +   \frac{v}{R^2\eta^{\frac{2}{\beta}}}\bigg) + \frac{C}{(\operatorname{det}F^{ij})^{1/n}}\bigg(\frac{v}{R\eta^{\frac{1}{\beta}}} +  v +  \eta\bigg)\nonumber \\
	&  \leq C\bigg(\frac{\operatorname{tr}(A_{H,J}[u])}{f(\lambda(A_{H,J}[u]))}\bigg)^{\gamma/n}\bigg( \frac{v}{R\eta^{\frac{1}{\beta}}} + v + \eta  +  \frac{v}{R^2\eta^{\frac{2}{\beta}}}\bigg) \nonumber \\
	& \quad + C\bigg(\frac{\operatorname{tr}(A_{H,J}[u])}{f(\lambda(A_{H,J}[u]))}\bigg)^{\gamma/n}\frac{1}{\operatorname{tr}(F)}\bigg(\frac{v}{R\eta^{\frac{1}{\beta}}} +  v +  \eta\bigg) \nonumber \\
	& \leq C\bigg( \frac{v}{R\eta^{\frac{1}{\beta}}} + v + \eta  +  \frac{v}{R^2\eta^{\frac{2}{\beta}}}\bigg)(\Delta u + C)^{\gamma/n},
	\end{align}
	where to reach the last line we have used the equation \eqref{25} and the fact that $\psi[u]$ and $\operatorname{tr}(F(A))= \sum_i f_{\lambda_i}(\lambda(A)) = f(\lambda(A)) +\sum_i f_{\lambda_i}(\lambda(A))(1-\lambda_i) \geq f(1,\dots,1)$ are both bounded away from zero. 
	
	With \eqref{e9} established, we are now in a position to apply the ABP estimate as stated in Theorem \ref{alex}, which yields
	\begin{align}\label{2-}
	\sup_{B_{2R}} v & \leq CR\bigg(\int_{\Gamma_v^+(B_{2R})}\frac{(-F^{ij}\nabla_i\nabla_j v)^n}{\operatorname{det}(F^{ij})}\,dx\bigg)^{1/n} \nonumber \\
	& \leq C\bigg(\int_{\Gamma_v^+(B_{2R})}(\eta^{-\frac{1}{\beta}}v)^n(\Delta u + C)^{\gamma}\bigg)^{1/n} +CR\bigg(\int_{\Gamma_v^+(B_{2R})} v^n(\Delta u + C)^{\gamma}\bigg)^{1/n} \nonumber \\
	& \quad  +CR\bigg(\int_{\Gamma_v^+(B_{2R})} \eta^n(\Delta u + C)^{\gamma}\bigg)^{1/n}+ CR^{-1}\bigg(\int_{\Gamma_v^+(B_{2R})}(\eta^{-\frac{2}{\beta}}v)^n(\Delta u + C)^{\gamma}\bigg)^{1/n}.
	\end{align}
	We estimate each of the four integrals on the RHS of \eqref{2-} in turn, starting with the last one. Writing $\eta^{-\frac{2}{\beta}}v = v^{1-\frac{2}{\beta}}(\Delta^h_{\xi\xi} u)^{\frac{2}{\beta}}$, and noting that $1-\frac{2}{\beta}>0$ (since $\beta>2$), we see
	\begin{align}\label{e21}
	\bigg(\int_{\Gamma_v^+(B_{2R})}\!(\eta^{-\frac{2}{\beta}}v)^n(\Delta u+C)^{\gamma}\!\bigg)^{1/n} \!\leq (\sup_{B_{2R}}v)^{1-\frac{2}{\beta}}\bigg(\int_{B_{2R}}|\Delta^h_{\xi\xi} u|^{\frac{2n}{\beta}}(\Delta u+C)^{\gamma}\!\bigg)^{1/n}\!\!,
	\end{align}
	where we have assumed $\sup_{B_{2R}} v \geq 0$ (otherwise we are done). We estimate the remaining three terms on the RHS of \eqref{2-} so as to put them on equal footing with \eqref{e21}. For the first term, note $\eta^{-\frac{1}{\beta}}v = v^{1-\frac{2}{\beta}}(\sqrt{\eta}|\Delta^h_{\xi\xi} u|)^{\frac{2}{\beta}} \leq v^{1-\frac{2}{\beta}}|\Delta^h_{\xi\xi} u|^{\frac{2}{\beta}}$, so
	\begin{align*}
	\bigg(\int_{\Gamma_v^+(B_{2R})}(\eta^{-\frac{1}{\beta}}v)^n(\Delta u+C)^{\gamma}\bigg)^{1/n} \leq (\sup_{B_{2R}}v)^{1-\frac{2}{\beta}}\bigg(\int_{B_{2R}}|\Delta^h_{\xi\xi} u|^{\frac{2n}{\beta}}(\Delta u+C)^{\gamma}\bigg)^{1/n}.
	\end{align*}
	Similarly, for the second term, $v=v^{1-\frac{2}{\beta}}(\eta|\Delta^h_{\xi\xi} u|)^{\frac{2}{\beta}} \leq v^{1-\frac{2}{\beta}}|\Delta^h_{\xi\xi} u|^{\frac{2}{\beta}}$, so 
	\begin{align*}
	\bigg(\int_{\Gamma_v^+(B_{2R})} v^n(\Delta u+C)^{\gamma}\bigg)^{1/n}  \leq (\sup_{B_{2R}}v)^{1-\frac{2}{\beta}}\bigg(\int_{B_{2R}}|\Delta^h_{\xi\xi} u|^{\frac{2n}{\beta}}(\Delta u+C)^{\gamma}\bigg)^{1/n},
	\end{align*}
	and for the third term on the RHS of \eqref{2-} it is easy to see that
	\begin{equation*}
	\bigg(\int_{\Gamma_v^+(B_{2R})} \eta^n(\Delta u+C)^{\gamma}\bigg)^{1/n} \leq \bigg(\int_{B_{2R}} (\Delta u+C)^{\gamma}\bigg)^{1/n} \leq C. 
	\end{equation*}
	
	Substituting the previous four estimates into \eqref{2-} and using the fact that $R$ is bounded, we therefore obtain
	\begin{align}\label{30}
	\sup_{B_{2R}} v \leq CR^{-1}(\sup_{B_{2R}}v)^{1-\frac{2}{\beta}}\bigg(\int_{B_{2R}}|\Delta^h_{\xi\xi} u|^{\frac{2n}{\beta}}(\Delta u+C)^{\gamma}\bigg)^{1/n} + CR^{-1}.
	\end{align}

	We now choose $\beta = \frac{2n}{p-\gamma}$, where $p$ is as in the statement of Theorem \ref{A'}, so that $\frac{2n}{\beta} = p-\gamma$. After applying H\"older's inequality to the integral on the RHS of \eqref{30}, dividing through by $(\sup_{B_{2R}}v)^{1-\frac{2}{\beta}}$, and using the inequality $\|\Delta_{\xi\xi}^h u\|_{L^s(B_{2R})} \leq C(n)\|\Delta u\|_{L^s(B_{3R})}$ for any $s\geq 1$ (see e.g.~\cite[Lemma 7.23]{GT}), we obtain the estimate
	\begin{align}\label{43}
	(\sup_{B_{2R}} v)^{2/\beta}\leq CR^{-1}\|\Delta u+C \|_{L^p(B_{3R})}^{p/n} + \frac{CR^{-1}}{(\sup_{B_{2R}} v)^{1-\frac{2}{\beta}}}.
	\end{align}
	If $\sup_{B_{2R}} v\leq 1$ then we are done. Supposing otherwise, \eqref{43} then implies
	\begin{align*}
	(\sup_{B_{2R}} v)^{2/\beta}\leq CR^{-1}\big(1+\|\Delta u +C\|_{L^p(B_{3R})}^{p/n} \big),
	\end{align*}
	and we therefore arrive at the estimate 
	\begin{equation}\label{100}
	\sup_{B_R} \Delta^h_{\xi\xi} u \leq CR^{-\beta/2}\big(1+\|\Delta u + C \|_{L^p(B_{3R})}^{p/n} \big)^{\beta/2}.
	\end{equation}
	As explained at the start of Section \ref{38}, the full Hessian bound for $u$ follows from \eqref{100}, and this completes the proof of Theorem \ref{A'}.
\end{proof}

\bibliography{references}{}
\bibliographystyle{siam}

\end{document}